\newtheorem{ThmIntro}{Theorem}
\newtheorem{CorIntro}[ThmIntro]{Corollary}
\newtheorem{thm}{Theorem}[section]
\newtheorem{cor}[thm]{Corollary}
\newtheorem{lemma}[thm]{Lemma}
\newtheorem{prop}[thm]{Proposition}
\theoremstyle{definition}
\newtheorem{question}[thm]{Question}
\theoremstyle{remark}
\newtheorem{rem}[thm]{Remark}
\newtheorem{ex}[thm]{Example}
\numberwithin{equation}{section}
\newcommand{\acts}{\ensuremath{\curvearrowright}}%
\newcommand{\K}{\mathbf{K}}
\newcommand{\bO}{\mathbf{O}}
\newcommand{\bH}{\mathbf{H}}
\newcommand{\N}{\mathbf{N}}
\newcommand{\R}{\mathbf{R}}
\newcommand{\T}{\mathbf{T}}
\newcommand{\cA}{\mathcal{A}}
\newcommand{\cO}{\mathcal{O}}
\newcommand{\cB}{\mathcal{B}}
\newcommand{\C}{\mathbf{C}}
\newcommand{\sign}{\ensuremath{\mathrm{sgn} }}%
\newcommand{\Aut}{\mathord{\text{\rm Aut}}}
\newcommand{\Isom}{\mathord{\text{\rm Isom}}}
 \newcommand{\id}{\textnormal{id}}
\newcommand{\rd}{\: \mathrm{d}}
\begin{document}

\title{Isometric actions on $L_p$-spaces: \\
dependence on the value of $p$}

\date{}
\author[Marrakchi]{Amine Marrakchi} \address{UMPA, CNRS ENS de Lyon\\ Lyon\\FRANCE}\email{amine.marrakchi@ens-lyon.fr}
\author[de la Salle]{Mikael de la Salle} \address{UMPA, CNRS ENS de Lyon\\ Lyon\\FRANCE}\email{mikael.de.la.salle@ens-lyon.fr}

\begin{abstract} Answering a question by Chatterji--Dru\c{t}u--Haglund, we prove that, for every locally compact group $G$, there exists a critical constant $p_G \in [0,\infty]$ such that $G$ admits a continuous affine isometric action on an $L_p$ space ($0<p<\infty$) with unbounded orbits if and only if $p \geq p_G$. A similar result holds for the existence of proper continuous affine isometric actions on $L_p$ spaces. Using a representation of cohomology by harmonic cocycles, we also show that such unbounded orbits cannot occure when the linear part comes from a measure preserving action, or more generally a state-preserving action on a von Neumann algebra and $p>2$. We also prove the stability of this critical constant $p_G$ under $L_p$ measure equivalence, answering a question of Fisher. We use this to show that for every connected semisimple Lie group $G$ and for every lattice $\Gamma < G$, we have $p_\Gamma=p_G$.
\end{abstract}
\maketitle

\section{Introduction}

The study of affine isometric actions of groups on Banach spaces is an important theme in mathematics that is related to many other topics such as group cohomology, fixed point properties and geometric group theory. The case of actions on Hilbert spaces is very well-studied. For example, it is known that a second countable locally compact group $G$ has an affine isometric action on Hilbert spaces without fixed points (resp.\ proper) if and only if $G$ does not have Kazhdan's property (T) (resp. has the Haagerup property). On the other hand, while the groups $G=\mathrm{Sp}(n,1)$ have property (T), Pansu showed in \cite{MR1086210} that they admit affine isometric actions without fixed points on $L_p(G)$ for all $p > 4n+2$ (and actually proper actions by \cite{MR2421319}). This was generalized by Bourdon and Pajot in \cite{MR1979183}. In \cite{MR2221161}, Yu proved that any hyperbolic group $\Gamma$ admits a proper affine isometric action on $\ell^p(\Gamma \times \Gamma)$ for all $p$ large enough, see also \cite{MR3069362}. For more results and references, we refer to \cite{MR2316269} where a systematic study of affine isometric actions of groups on $L_p$-spaces was undertaken. As suggested by the previous results, it is very natural to expect that for a given group $G$, it should be ``easier" to act isometrically on an $L_p$-space when the value of $p$ gets larger. The main result of this paper confirms this intuition. In this statement as in the whole paper, $L_p$ space means $L_p(X,\mu)$ for a standard measure space $(X,\mu)$.

\begin{ThmIntro} \label{thm:main}
Let $G$ be a topological group. Take $0 < p \leq q < \infty$. Then for every continuous affine isometric action $\alpha : G \curvearrowright L_p$, there exists a continuous affine isometric action $\beta : G \curvearrowright L_q$ such that $\| \alpha_g(0) \|^p_{L_p}=\| \beta_g(0) \|^q_{L_q}$ for all $g \in G$.
\end{ThmIntro}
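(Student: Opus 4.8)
The plan is to reduce the statement to the construction of a single nonlinear ``change of exponent'' map and then to arrange that it be equivariant. First I would use the standard decomposition $\alpha_g(\xi) = \pi(g)\xi + b(g)$, where $\pi$ is a continuous linear isometric representation of $G$ on $L_p = L_p(X,\mu)$ and $b \colon G \to L_p$ is a continuous $1$-cocycle, that is $b(gh) = \pi(g)b(h) + b(g)$. Since $b(g) = \alpha_g(0)$, the target identity reads $\|b(g)\|_{L_p}^p = \|c(g)\|_{L_q}^q$, where $c$ is the cocycle of the action $\beta$ that we must produce.

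The natural candidate is the Mazur map $M_{p,q} \colon L_p \to L_q$, $M_{p,q}(\xi) = |\xi|^{p/q}\,\mathrm{sgn}(\xi)$, which satisfies $\|M_{p,q}(\xi)\|_{L_q}^q = \|\xi\|_{L_p}^p$ pointwise, so that setting $c = M_{p,q}\circ b$ would give the norm identity for free. The work is then to produce a linear isometric representation $\pi_q$ of $G$ on some $L_q$ space for which $c$ is a genuine cocycle. Here I would invoke the Banach--Lamperti description of surjective linear isometries of $L_p$: each $\pi(g)$ is a weighted composition (Koopman--type) operator attached to a nonsingular action $G \curvearrowright (X,\mu)$ and a unimodular $1$-cocycle, the weight being the $1/p$-power of a Radon--Nikodym derivative. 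Replacing the exponent $1/p$ by $1/q$ in these weights defines a representation $\pi_q$ on $L_q(X,\mu)$, and a direct pointwise computation shows that $M_{p,q}$ intertwines the linear parts, $M_{p,q}(\pi(g)\xi) = \pi_q(g)M_{p,q}(\xi)$.

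The remaining, and main, obstacle is that $M_{p,q}$ is not additive, whereas the cocycle relation involves the sum $\pi(g)b(h) + b(g)$. Since $M_{p,q}$ \emph{is} additive precisely on disjointly supported functions, the strategy is to conjugate $\alpha$ to an equivalent affine isometric action whose cocycle increments $\pi(g)b(h)$ and $b(g)$ occupy disjoint supports. I would achieve this by a measured disjointification: using the layer-cake formula $\|f\|_{L_p}^p = \int_0^\infty p\,t^{p-1}\mu(|f|>t)\,dt$ together with the Lamperti structure to encode the action on a larger $G$-space carrying a $G$-invariant measure (a measured-wall / half-space type structure), so that the $p$-th power of the cocycle length is realized as the measure of a family of pairwise disjoint ``separating'' sets, in a way that no longer refers to the exponent. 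Re-integrating this exponent-free structure at the power $q$ then yields a cocycle $c$ into $L_q$ with $\|c(g)\|_{L_q}^q$ equal to the same separation measure, hence to $\|b(g)\|_{L_p}^p$.

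Finally I would record the routine points: that $\pi_q$ is genuinely isometric and that $\beta_g = \pi_q(g)\,\cdot\, + c(g)$ defines an affine isometric action, and that continuity of $\beta$ follows from continuity of $b$ and the uniform continuity of $M_{p,q}$ on norm-bounded subsets of $L_p$ for $0 < p,q < \infty$. I expect the genuinely delicate step to be the disjointification, namely showing that an arbitrary $L_p$-cocycle, and not only one arising from a wall space, can be put in the exponent-free form above; this is where the interplay between the additive cocycle relation and the multiplicative lattice structure preserved by Lamperti isometries must be exploited.
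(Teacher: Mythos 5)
Your setup is fine and agrees with the paper's own reductions: the decomposition $\alpha_g(\xi)=\pi(g)\xi+b(g)$, the Banach--Lamperti form of $\pi$ (valid only for $p\neq 2$; the paper must treat $p=2$ separately via the Gaussian functor, a case your argument silently skips), and the fact that $M_{p,q}$ intertwines the Lamperti representations $\pi^{p,\mu}$ and $\pi^{q,\mu}$. The gap is the step you yourself flag as delicate: the ``measured disjointification'' is not an argument, and no argument of that shape can exist. Taken literally, disjointness of $\pi(g)b(h)$ and $b(g)$ for all $g,h$ gives $\psi(gh)=\psi(g)+\psi(h)$ for $\psi(g)=\|b(g)\|_{L_p}^p$, and since $\psi(g^{-1})=\psi(g)$ and $\psi(e)=0$ this forces $\psi\equiv 0$. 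Taken in the charitable (measured-walls) sense, an ``exponent-free'' realization of $\psi$ as the measure of separating sets would give a cocycle $c$ with $\|c(g)\|_{L_q}^q=\psi(g)$ for \emph{every} $q>0$, not only $q>p$; in particular for $q=2$, so $\psi$ would be conditionally of negative type. This is false in general: $\mathrm{Sp}(n,1)$ has property (T), hence all its conditionally negative type functions are bounded, yet by Pansu it admits continuous affine isometric actions on $L_p$ with unbounded orbits for $p>4n+2$. So for such an action the structure you want to build cannot exist. Any correct proof must use the hypothesis $q>p$ in an essential way, and your proposal contains no mechanism that does.

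For contrast, the paper never tries to make the Mazur map equivariant. After reducing (skew product by the $\T$-valued cocycle) to linear part $\sigma^{p,\mu}$ for a nonsingular action $\sigma$, it passes to the Maharam extension $\widetilde\sigma$, which is measure preserving and over which the $L_p$-cocycle $c$ becomes the scalar $L_0$-valued cocycle $\widetilde c(g)(x,\lambda)=\lambda^{-1/p}c(g)(x)$; a second skew product, by $\widetilde c$ itself, turns $\widetilde c$ into a coboundary $\rho_g(h_0)-h_0$ of a function $h_0\in L_0$ over a measure-preserving action $\rho$. The $L_q$-cocycle is then $b(g)=\rho_g(\varphi\circ h_0)-\varphi\circ h_0$ for a fixed radial compactly supported Lipschitz bump $\varphi$: the cocycle identity is automatic because $b$ is a coboundary, and the norm identity comes from the integral formula $\int_{\C}\int_0^\infty |\varphi(z+\lambda^{-1/p}w)-\varphi(z)|^q \rd\lambda \rd z = C(q)|w|^p$ of Lemma 4.2, whose finiteness requires exactly $q>p$. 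That integrability condition is where the asymmetry between $p$ and $q$ enters, and it has no counterpart in your outline.
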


Theorem \ref{thm:main} implies in particular that if a group $G$ has a continuous action by isometries on an $L_p$ space with unbounded (respectively metrically proper) orbits, then it has such an action on an $L_q$ space.

\begin{CorIntro} \label{cor:main}
For every topological group $G$,
\begin{enumerate}[ \rm (i)]

\item The set of values of $p \in (0,\infty)$ such that $G$ admits a continuous action by isometries on an $L_p$ space with unbounded orbits is an interval of the form $(p_G,\infty)$ or $[p_G,\infty)$ for some $p_G \in \{0\} \cup [2,\infty]$.
\item The set of values of $p \in (0,\infty)$ such that $G$ admits a proper continuous action by isometries on an $L_p$ space is an interval of the form $(p'_G,\infty)$ or $[p'_G,\infty)$ for some $p'_G \in \{0\} \cup [2,\infty] $.
\end{enumerate}
\end{CorIntro}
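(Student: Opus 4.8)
The plan is to deduce both statements directly from Theorem~\ref{thm:main} by showing that, in each case, the set of admissible $p$ is \emph{upward closed}: whenever $p$ belongs to it and $q > p$, then $q$ belongs to it as well. An upward closed subset of $(0,\infty)$ is automatically either empty or an interval of the form $(p_0,\infty)$ or $[p_0,\infty)$ with $p_0$ its infimum (and $p_0 = \infty$ recovers the empty set in case (i)), so this is all that must be proved.

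First I would record the standard translation of the two coarse properties into the behaviour of the single map $g \mapsto \|\alpha_g(0)\|$. Since a group acts by \emph{surjective} isometries (each $\alpha_g$ is inverted by $\alpha_{g^{-1}}$), and surjective isometries of $L_p$ are affine, every continuous action by isometries in the sense of the corollary is affine, so Theorem~\ref{thm:main} applies to it. Writing $\alpha_g(\xi) = \pi_g \xi + b_g$ with linear part $\pi_g$ and $b_g = \alpha_g(0)$, the triangle inequality gives $\bigl|\,\|\alpha_g(\xi) - \xi\| - \|\alpha_g(0)\|\,\bigr| \le 2\|\xi\|$ for every $\xi$, so the choice of base point is irrelevant up to a bounded error. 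Consequently $\alpha$ has unbounded orbits if and only if $\sup_{g} \|\alpha_g(0)\| = \infty$, and, for $G$ locally compact, $\alpha$ is metrically proper if and only if $\{g \in G : \|\alpha_g(0)\| \le R\}$ is relatively compact for every $R > 0$.

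Next I would feed an action on $L_p$ into Theorem~\ref{thm:main}. Given such an $\alpha$ and any $q > p$, the theorem produces a continuous affine isometric action $\beta : G \curvearrowright L_q$ with $\|\beta_g(0)\|_{L_q} = \|\alpha_g(0)\|_{L_p}^{\,p/q}$ for all $g$. The function $t \mapsto t^{p/q}$ is an increasing homeomorphism of $[0,\infty)$ fixing $0$ and sending $\infty$ to $\infty$. Hence $\sup_g \|\beta_g(0)\|_{L_q} = \bigl(\sup_g \|\alpha_g(0)\|_{L_p}\bigr)^{p/q}$, which is infinite precisely when $\sup_g \|\alpha_g(0)\|_{L_p}$ is; this transfers unbounded orbits from $p$ to $q$ and proves (i). Similarly, for every $R > 0$ one has the exact identity of sets $\{g : \|\beta_g(0)\|_{L_q} \le R\} = \{g : \|\alpha_g(0)\|_{L_p} \le R^{q/p}\}$, so relative compactness of all sublevel sets for $\alpha$ yields the same for $\beta$; this transfers metric properness and proves (ii).

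I do not expect a genuine obstacle here, since the entire content is packaged in Theorem~\ref{thm:main}: both coarse-geometric properties depend only on the single positive function $g \mapsto \|\alpha_g(0)\|$, and that function is transported from $p$ to $q$ by the explicit monotone rescaling $t \mapsto t^{p/q}$. The only points that require a line of care are the reduction to affine actions via surjectivity (the Mazur--Ulam theorem in the range $p \ge 1$) and the verification that ``unbounded orbits'' and ``metrically proper'' are insensitive to the choice of base point, both of which are routine.
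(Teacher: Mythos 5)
Your proof is correct and follows essentially the same route as the paper: the corollary is deduced from Theorem~\ref{thm:main} by observing that the identity $\|\alpha_g(0)\|_{L_p}^p = \|\beta_g(0)\|_{L_q}^q$ transports both unboundedness of orbits and properness (via the sublevel sets of $g \mapsto \|\alpha_g(0)\|$) from $p$ to any $q > p$, so each set of admissible exponents is upward closed and hence an interval of the stated form. Your additional care about base-point independence and the reduction of isometric actions to affine ones via Mazur--Ulam is exactly the (implicit) content of the paper's one-line deduction.
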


Recall that for $1 \leq p<\infty$, an action by isometries on an $L_p$ space has a fixed point if and only if it has bounded orbits (\cite[Lemma 2.14]{MR2316269} for $p \neq 1$, \cite{MR2929085} for $p=1$). So Corollary \ref{cor:main} answers a question raised in \cite{MR2671183}, sometimes refered to as Dru\c{t}u's conjecture \cite{MR3382026,lavyOlivier}. A partial answer for $\ell_p$ spaces had already been obtained independantly by Czuro\'{n} \cite{MR3590529} and Lavy--Ollivier \cite{lavyOlivier}. In \cite{lavyOlivier} actions coming from ergodic probability measure preserving actions were also covered. Unlike these previous results, it is worth mentionning that in Theorem \ref{thm:main}, the linear part of the action $\beta$ that we construct is very different from the linear part of the original action $\alpha$. We refer to Theorem \ref{main precise form} for a more precise statement.

When $G$ is a second countable locally compact group, it is known that $p_G > 0$ if and only if $G$ has property (T) \cite{MR2316269}, in which case we must have $p_G \geq 2$ (in fact, by an argument of Fisher and Margulis from \cite{MR2198325} that appears in \cite{MR2316269}, one can even show that $p_G > 2$ and that $G$ admits a continuous action by isometries on an $L_p$ space without fixed points for $p=p_G$, see also \cite{lavyOlivier,MR3753580,MR4014781}). Similarly, it is known that $p'_G > 0$ if and only if $G$ does not have the Haagerup property, in which case $p'_G \geq 2$ \cite{MR2270593}. This last fact, as well as the fact that $p_G \notin (0,2)$, if often stated for second countable locally compact groups, but they are true for arbitrary topological groups, see Proposition \ref{gaussian} (but the fact that $p'_G \notin(0,2)$ is meaningful only for locally compact groups, as $p'_G=\infty$ trivially for non locally compact groups). The critical constant $p_G$ and $p'_G$ are different in general. For example, if $G$ is a locally compact group that has neither Kazhdan's property (T) nor Haagerup property, then $p_G=0$ and $p'_G \geq 2$. It is also known that, among Gromov-hyperbolic groups, the value of $p_G$ is unbounded, and explicit lower bounds have been obtained for random groups \cite{MR3872847} (see also \cite{L2spectralGap}).

The linear part the action $\beta$ constructed in Theorem \ref{thm:main} comes from an action on $(X,\mu)$ preserving an infinite measure. It is not possible to achieve the same with a finite measure. Indeed, when $G$ has property (T), any affine action on $L_p$ ($1\leq p<\infty$) whose linear part comes from a probability measure preserving action has a fixed point. This is known when $G$ is discrete \cite{lavyOlivier} or when $G$ admits a finite Kazhdan set \cite{CzuronKalentar}. In general, this is a particular case of the following result dealing with non-commutative $L_p$ spaces. Its proof relies on a general observation of independant interest: under a spectral gap and uniform convexity assumption, any cohomology class with values in an isometric representation has a unique harmonic representent (Lemma \ref{harmonic representent}).
\begin{ThmIntro}\label{thm:NCLp} Let $G$ be a locally compact property (T) group. An action $\alpha : G \curvearrowright L_p(M)$ by affine isometries on the non-commutative $L_p$ space of a von Neumann algebra $M$ has a fixed point in the following two cases:
  \begin{itemize}
  \item $1<p\leq 2$ and $M=B(H)$ for a Hilbert space $H$.
  \item $2 \leq p <\infty$ and the linear part of $\alpha$ comes from an action by automorphisms of $M$ preserving a faithful normal state.
  \end{itemize}
\end{ThmIntro}
We insist that the first conclusion of the theorem is not trivial. It is indeed an open question by Masato Mimura whether a locally compact property (T) group can have an unbounded action by isometries on a non-commutative $L_p$ space for $p<2$ (as explained above this is not possible for usual $L_p$ spaces), and the previous result provides a negative answer for Schatten classes.

It is known that the metric space $(L_p,\|\cdot\|_p^{\frac p q})$ isometrically embeds into $(L_q,\|\cdot\|_q)$ when $p \leq q$ \cite[Remark 5.10]{MR2101227}. This implies the following inequalities for the compression exponents \cite{MR2783928} of a compactly generated group $G$
\begin{equation}\label{eq:compression_exponent} \forall 0<p < q<\infty, \; p \alpha_p(G) \leq q \alpha_q(G).
  \end{equation}
As a consequence of Theorem \ref{thm:main}, we obtain the same inequalities for the equivariant compression exponents.
\begin{equation}\label{eq:equiv_compression_exponent} \forall 0<p < q<\infty, \; p \alpha_p^\#(G) \leq q \alpha_q^\#(G).
  \end{equation}
This inequality is often strict, see \cite{MR2783928}.

We also note that Theorem \ref{thm:main} can be applied to the whole isometry group of $L_p$ and this yields the following corollary.
\begin{CorIntro} \label{cor:isom embedding}
Take $0 <p \leq q < \infty$. Then $\Isom(L_p)$ is isomorphic as a topological group to a closed subgroup of $\Isom(L_q)$.
\end{CorIntro}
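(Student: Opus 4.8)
Since the case $p=q$ is trivial, assume $0<p<q$. The plan is to apply Theorem~\ref{thm:main} to the group $G=\Isom(L_p)$ itself, equipped with its tautological action. Recall that by the Mazur--Ulam theorem (and its analogue for $0<p<1$) every surjective isometry of $L_p$ is affine, so $\Isom(L_p)=L_p\rtimes O(L_p)$, where $O(L_p)$ denotes the group of surjective linear isometries (the ``orthogonal group'' of $L_p$); an element is a pair $(v,T)$ acting by $\xi\mapsto T\xi+v$, and the pointwise-convergence topology is the product of the norm topology on $L_p$ and the strong operator topology on $O(L_p)$. With this topology both $\Isom(L_p)$ and $\Isom(L_q)$ are Polish groups. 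Let $\alpha$ be the tautological (continuous, affine, isometric and faithful) action of $G$ on $L_p$, and note that its cocycle $b_\alpha(g):=\alpha_g(0)$ is \emph{surjective} onto $L_p$, since every translation lies in $G$. Applying Theorem~\ref{thm:main} produces a continuous affine isometric action $\beta:G\curvearrowright L_q$ with $\|\beta_g(0)\|_q^q=\|\alpha_g(0)\|_p^p$ for all $g$. I set $\Phi(g):=\beta_g\in\Isom(L_q)$; as $\beta$ is an action, $\Phi$ is a continuous group homomorphism, and it remains to show that $\Phi$ is injective, that it is a homeomorphism onto its image, and that the image is closed.

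The key step is to upgrade the scalar norm identity into two functional equations. Writing $\alpha_{h^{-1}}=\alpha_h^{-1}$ and using that linear parts are isometric gives $\|\alpha_{h^{-1}g}(0)\|_p=\|\alpha_g(0)-\alpha_h(0)\|_p$, and likewise for $\beta$; feeding $h^{-1}g$ into the norm identity therefore yields
\[
  \|\beta_g(0)-\beta_h(0)\|_q^q=\|\alpha_g(0)-\alpha_h(0)\|_p^p \qquad (g,h\in G).
\]
Since $b_\alpha$ is surjective, the assignment $\alpha_g(0)\mapsto\beta_g(0)$ is a well-defined map $\psi:L_p\to L_q$ with $\|\psi(u)-\psi(v)\|_q=\|u-v\|_p^{p/q}$; as $t\mapsto t^{p/q}$ is a homeomorphism of $[0,\infty)$, $\psi$ is a homeomorphism onto its image with uniformly continuous inverse. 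Taking $h=e$ shows $\beta_{(0,T)}(0)=0$ for $T\in O(L_p)$, so $\beta_{(0,T)}$ is linear, equal to its linear part $\rho(T):=\pi^\beta_{(0,T)}\in O(L_q)$. Evaluating the cocycle identity of $\beta$ on the product $(0,T)(v,I)=(Tv,T)$ then yields the equivariance relation $\psi(Tv)=\rho(T)\,\psi(v)$ for all $T\in O(L_p)$ and $v\in L_p$.

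These two relations give everything. Injectivity of $\rho$ is immediate: $\rho(T)=\mathrm{id}$ forces $\psi(Tv)=\psi(v)$, hence $Tv=v$ for all $v$ by injectivity of $\psi$, so $T=\mathrm{id}$; and $\rho$ is a topological embedding because $\rho(T_n)\to\rho(T)$ strongly implies $\psi(T_n v)\to\psi(Tv)$, whence $T_nv\to Tv$ by continuity of $\psi^{-1}$. Injectivity of $\Phi$ follows: if $\beta_g=\mathrm{id}$ then $\psi(b_\alpha(g))=\beta_g(0)=0=\psi(0)$ gives $b_\alpha(g)=0$, so $g=(0,T)$ is linear and $\beta_g=\rho(T)=\mathrm{id}$ forces $T=\mathrm{id}$. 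To see that $\Phi$ is a homeomorphism onto its image it suffices, by homogeneity, to show $\beta_{g_n}\to\mathrm{id}$ implies $g_n\to e$: writing $g_n=(v_n,T_n)$, the translation parts converge since $\psi(v_n)=\beta_{g_n}(0)\to0$ gives $v_n\to0$ via $\psi^{-1}$, hence $(v_n,I)\to e$ and $\pi^\beta_{(v_n,I)}\to\mathrm{id}$ by continuity of $\beta$; combining this with $\pi^\beta_{g_n}\to\mathrm{id}$ and the factorization $\pi^\beta_{g_n}=\pi^\beta_{(v_n,I)}\rho(T_n)$ forces $\rho(T_n)\to\mathrm{id}$, so $T_n\to\mathrm{id}$ by the embedding property of $\rho$. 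Finally, closedness of the image comes for free: $\Phi(\Isom(L_p))$ is a subgroup homeomorphic to the Polish group $\Isom(L_p)$, hence Polish in the subspace topology, and a Polish subgroup of a Polish group is automatically closed.

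The main obstacle I anticipate is precisely the passage recorded above: extracting the two functional equations for $\psi$ and $\rho$ from the single scalar identity of Theorem~\ref{thm:main}, and checking that the recovery of the \emph{linear} part (the embedding property of $\rho$) combines correctly with the translation part inside the semidirect product. A pleasant feature of this abstract route is that it never invokes the Banach--Lamperti description of linear isometries, so it treats all exponents uniformly; in particular the delicate case $p=2$, where conjugation by the Mazur map fails to be linear, needs no special argument. The only external inputs are the Mazur--Ulam theorem, used to identify $\Isom(L_p)$ with $L_p\rtimes O(L_p)$, and the standard fact that a Polish subgroup of a Polish group is closed.
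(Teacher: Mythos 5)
Your proof is correct, and it opens with the same (essentially forced) move as the paper: apply Theorem~\ref{thm:main} to $G=\Isom(L_p)$ acting tautologically on $L_p$, obtaining a continuous homomorphism into $\Isom(L_q)$ that matches $\|g(0)\|_p^p$ with the $q$-th power of the displacement of the image. Where you genuinely diverge is in proving that this homomorphism is a topological embedding. The paper does it with a short conjugation trick that never decomposes $\Isom(L_p)$: for $f\in L_p$ and $\tau_f$ the translation by $f$, the identity of Theorem~\ref{thm:main} applied to $\tau_f^{-1}\circ g_n\circ\tau_f$ gives $\|g_n(f)-f\|_p^p=\|\Psi(\tau_f^{-1}\circ g_n\circ\tau_f)(0)\|_q^q$, and $\Psi(g_n)\to\id$ forces $\Psi(\tau_f^{-1}\circ g_n\circ\tau_f)\to\id$, so $g_n(f)\to f$ for every $f$ simultaneously --- translation and linear parts are never separated. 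You instead exploit the semidirect product structure $\Isom(L_p)=\cO(L_p)\ltimes L_p$, upgrade the scalar identity to the equivariant pair $(\psi,\rho)$ via the difference identity $\|\beta_g(0)-\beta_h(0)\|_q^q=\|\alpha_g(0)-\alpha_h(0)\|_p^p$, and then control the two parts separately. This is longer but buys something real: your $(\psi,\rho)$ is exactly the $\cO(L_p)$-equivariant isometric embedding of $(L_p,\|\cdot\|_p^{p/q})$ into $L_q$ that the paper singles out in the introduction, so your argument makes that structural fact carry the proof. Two caveats. First, your appeal to a Mazur--Ulam theorem ``and its analogue for $0<p<1$'' is both unnecessary and shaky: for $0<p<1$ the space $L_p$ is only an F-space with metric $\|f-g\|_p^p$, where the classical Mazur--Ulam argument (which uses metric midpoints) does not apply; but since the paper defines $\Isom(L_p)$ as the \emph{affine} isometry group, the decomposition $\cO(L_p)\ltimes L_p$ holds by definition and nothing in your proof is lost --- just drop that sentence, and read all your identities with the metric $\|\cdot\|_p^p$ when $p<1$. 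Second, both proofs ultimately need the standard fact that a Polish subgroup of a Polish group is closed in order to get closedness of the image; the paper leaves this entirely implicit, whereas you state it explicitly, which is a point in your favor.
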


Note that if $p > 2$, the subgroup of translations $L_p \subset \Isom(L_p)$ is not unitarily representable \cite[Theorem 3.1]{MR2353912}. In particular, it cannot be embedded as a closed subgroup of $\Isom(L_2)$, which is unitarily representable (by using the affine Gaussian functor \cite[Proposition 4.8]{arano2019ergodic} for example).

With Corollary \ref{cor:main} in hand, we can rephrase a Problem asked by Gromov in \cite[\S $6.D_3$]{MR1253544} as \emph{Given a group $\Gamma$, find the value of $p_G$ and $p'_G$}. The last section of this paper is devoted to this question, in the specific case of Lie groups and their lattices. We first show that for any connected semisimple Lie group $G$, the constants $p_G$ and $p'_G$ depend only on its Lie algebra $\mathfrak{g}$ and not on $G$ itself (see Theorem \ref{depend on lie algebra}). Moreover, we show how this constants can be in principle computed from the decomposition of $\mathfrak{g}$ into simple Lie algebras (see Example \ref{example lie algebras}). When $G$ is simple, we have $p_G=p'_G$ (see Theorem \ref{simple Lie group}). Pansu's result shows that if $G=\mathrm{Sp}(n,1)$ then $2 < p_G \leq 4n+2$ while if $G$ has real rank $\geq 2$, it is known that $p_G=\infty$ \cite[Theorem B]{MR2316269}. Our most technical result is the following one.

\begin{ThmIntro} \label{intro lattice stability}
Let $G$ be a connected semisimple Lie group and let $\Gamma < G$ be a lattice. Then $p_\Gamma=p_G$ and $p'_\Gamma=p'_G$.
\end{ThmIntro}

The proof of this theorems relies on two ingredients. The first one is the following result which shows that the constants $p_G$ and $p'_G$ behave nicely with respect to $L_p$-measure equivalence (see Theorem \ref{thm:relation_withLpME} and the definitions preceeding it). This answers a question by David Fisher.
\begin{ThmIntro}\label{thm:relation_withLpME} If two compactly generated locally compact groups $G_1$ and $G_2$ are $L_p$ measure equivalent, then the critical constants defined in Corollary \ref{cor:main} satisfy
  \[ \min(p_{G_1},p) = \min(p_{G_2},p) \textrm{ and } \min(p'_{G_1},p) = \min(p'_{G_2},p).\]
\end{ThmIntro}

When $\Gamma < G$ is a lattice, then $G$ and $\Gamma$ are measure equivalent. But by definition, they are $L_p$ measure equivalent if and only if $\Gamma$ is $L_p$ integrable. Thus for the proof of Theorem \ref{intro lattice stability}, we need the following second ingredient.

\begin{ThmIntro}
Let $G$ be a connected semisimple Lie group and let $\Gamma < G$ be a lattice. Then $\Gamma$ is $L_p$ integrable for all $p < p_G$.
\end{ThmIntro}

This paper is organized as follows. After some preliminaries in Section \ref{section:preliminaries}, Theorem \ref{thm:main} and its Corollary \ref{cor:isom embedding} are proved in Section \ref{section:p2} for $p=2$ and Section \ref{section:mainthm} in the general case. Section \ref{section:formal} contains a discussion on the validity of Corollary \ref{cor:main} when the linear part of the action of $L_p$ is fixed. Section \ref{section:harmonic} deals with harmonic cocycles and the proof of Theorem \ref{thm:NCLp}. In Section \ref{section:stability}, stability properties of the constants $p_G$ and $p'_G$ are investigated and in particular, Theorem \ref{thm:relation_withLpME} is proved. The last section is dedicated to the study of $p_G$ and $p'_G$ for connected semisimple Lie groups.

\subsection*{Acknowledgments} We are grateful to David Fisher for his question on $L_p$-measure equivalence which led us to Theorem \ref{thm:relation_withLpME}. We also thank Piotr Nowak for his comments which helped us to improve the presentation of this paper. We thank Romain Tessera for providing us with useful references.

\section{Preliminaries}\label{section:preliminaries}
\subsection*{Nonsingular actions}
Let $(X,\mu)$ be a $\sigma$-finite standard measure space (we will always assume that our measure spaces are standard and we omit the $\sigma$-algebra). We denote by $[\mu]$ the measure class of $\mu$. We denote by $\Aut(X,[\mu])$ the group of all \emph{nonsingular} (preserving the measure class $[\mu]$) automorphisms of $(X,\mu)$ up to equality almost everywhere. It is known that $\Aut(X,[\mu])$ is a Polish group for the topology of pointwise convergence on probability measures: a sequence $\theta_n \in \Aut(X,[\mu])$ converges to the identity if and only if $\lim_n \| (\theta_n)_* \nu - \nu \|_1=0$ for every probability measure $\nu \in [\mu]$. We denote by $\Aut(X,\mu)$ the group of all measure preserving automorphisms of $(X,\mu)$ up to equality almost everywhere. It is a closed subgroup of $\Aut(X,[\mu])$. A continuous nonsingular action $\sigma : G \curvearrowright (X,\mu)$ of a topological group $G$ is a continuous homomorphism $\sigma : G \ni g \mapsto \sigma_g \in \Aut(X,[\mu])$.

\subsection*{Cohomology} Let $\pi : G \curvearrowright V$ be a continuous linear representation of a topological group $G$ on a topological vector space $V$. We denote by $Z^1(G,\pi,V)$ the set of all continuous $1$-cocycles, i.e.\ all continuous maps $c : G \rightarrow V$ such that $c(gh)=c(g)+g \cdot c(h)$ for all $g,h \in G$. We denote by $B^1(G,\pi,V) \subset Z^1(G,\pi,V)$ the set of all coboundaries, i.e.\ cocycles $c$ of the form$c(g)=g \cdot v -v$ for some $v \in V$. 

Let $\sigma : G \curvearrowright (X,\mu)$ be a continuous nonsingular action of a topological group $G$. Let $A$ be an abelian topological group (here we use the additive notation for $A$ but this might change sometimes when $A=\T$). We denote by $Z^1_\sigma(G,A)$ the set of all $A$-valued $1$-cocycles of $\sigma$, i.e.\ all continuous functions $c : G \mapsto L_0(X,\mu,A)$ such that $c(gh)=c(g)+\sigma_g(c(h))$. Here for every $f \in L_0(X,\mu,A)$, we use the notation $\sigma_g(f)=f \circ \sigma_g^{-1}$. We denote by $B^1_\sigma(G,A)$ the set of all $1$-coboundaries, i.e.\ cocycles of the form $g \mapsto \sigma_g(f)-f$ for some $f \in L_0(X,\mu,A)$. Finally, we denote by $H^1_\sigma(G,A)=Z^1_\sigma(G,A)/B^1_\sigma(G,A)$ the cohomology group of $\sigma$. 

\subsection*{Skew-product actions} Let $\sigma : G \curvearrowright (X,\mu)$ be a continuous nonsingular action of a topological group $G$. Suppose that $A$ is a locally compact abelian group and let $m$ be the Haar measure of $A$. Then for every $c \in Z^1_\sigma(G,A)$, we can define a new continuous nonsingular action $\sigma \rtimes c$ of $G$ on $(X \times A, \mu \otimes m)$ by the formula 
$$ (\sigma \rtimes c)_g(x,a)=(gx,a+c(g^{-1})(x)).$$
The action $\sigma \rtimes c$ is called the \emph{skew-product action} of $\sigma$ by $c$. Define a function $h : X \times A \rightarrow A$ by $h(x,a)=a$ for all $(x,a) \in X \times A$. Then, by construction, we have $c(g) \otimes 1=(\sigma \rtimes c)_g(h)-h$. Thus the skew-product action $\sigma \rtimes c$ turns the cocycle $c$ into a coboundary. 

\subsection*{The Maharam extension} Let $\sigma : G \curvearrowright (X,\mu)$ be a continuous nonsingular action of a topological group $G$. Then we can define the Radon-Nikodym cocycle $D \in Z^1_\sigma(G,\R^*_+)$ by the formula $ D(g)=\frac{\rd (\sigma_g)_* \mu}{\rd \mu}$ for all $g \in G$. The skew-product action $\widetilde{\sigma}=\sigma \rtimes D : G \curvearrowright X \times \R^*_+$ is called the \emph{Maharam extension} of $\sigma$. Note that $\widetilde{\sigma}$ preserves the measure $\mu \otimes {\rm d} \lambda$ where $\rm d \lambda$ is the restriction to $\R^*_+$ of the Lebesgue measure of $\R$.

\subsection*{Isometric actions on $L_p$-spaces}
Take $p > 0$ and let $(X,\mu)$ be a $\sigma$-finite measure space. For every $\theta : \Aut(X,[\mu])$, we define a linear isometry of $L_p(X,\mu)$ given by
$$ f \mapsto \left( \frac{\theta_* \mu}{\mu} \right)^{1/p} \theta(f).$$
The group $L_0(X,[\mu],\T)$ also acts by multiplication on $L_p(X,\mu)$. We thus obtain a continuous linear isometric representation $$\pi^{p,\mu} : \Aut(X,[\mu]) \ltimes L_0(X,[\mu],\T) \rightarrow \cO(L_p(X,\mu)).$$
It follows from the Banach-Lamperti theorem that this map is surjective when $p \neq 2$. Note that if $\nu \in [\mu]$, the canonical isometry $L_p(X,\mu) \rightarrow L_p(X,\nu)$ given by $f \mapsto \left( \frac{\mu}{\nu} \right)^{1/p}f$ is equivariant with respect to the natural actions $\pi^{p,\mu}$ and $\pi^{p,\nu}$ of $ \Aut(X,[\mu]) \ltimes L_0(X,[\mu],\T)$.

Let $\sigma : G \rightarrow \Aut(X,[\mu])$ be a continuous nonsingular action of a topological group $G$. Then $\sigma^{p,\mu} =\pi^{p,\mu} \circ \sigma$ is a continuous linear isometric representation of $G$ on $L_p(X,\mu)$.  Let $D \in Z^1_\sigma(G,R^*_+)$ be the Radon-Nikodym cocycle. Then for every $p > 0$ and every $g \in G$, the isometry $\sigma^{p,\mu}(g)$ is given by the formula 
$$ \sigma^{p,\mu}(g) : L_p(X,\mu) \ni f \mapsto D(g)^{1/p} \sigma_g(f) \in L_p(X,\mu).$$ Take now some cocycle $\omega \in Z^1_\sigma(G,\T)$. Then the map $g \mapsto \omega(g) \sigma^p_g$ is again a continuous linear isometric representation of $G$ on $L_p(X,\mu)$. Conversely, if $p \neq 2$, it follows from the Banach-Lamperti theorem that every continuous linear isometric representation of $G$ on $L_p(X,\mu)$ is of the form $\pi : g \mapsto \omega(g) \sigma^{p,\mu}_g$ for some continuous nonsingular action $\sigma$ of $G$ and some cocycle $\omega \in Z^1_\sigma(G,\T)$.

Let $\alpha$ be an affine isometric action of $G$ on some $L_p$-space. Since the affine isometry group $\Isom(L_p(X,\mu))$ decomposes as a semi-direct product $$\Isom(L_p(X,\mu))=\cO(L_p(X,\mu)) \ltimes L_p(X,\mu)$$ where $L_p(X,\mu)$ acts by translations, we see that $\alpha$ is of the form $\alpha_g(f)=\pi(g)f+c(g)$ where $\pi$ is an isometric linear representation of $G$ on $L_p(X,\mu)$ and $c \in Z^1(G,\pi,L_p(X,\mu))$ is a cocycle. Observe that even when $\pi=\sigma^{p,\mu}$ for some nonsingular action $\sigma : G \curvearrowright (X,\mu)$, we do \emph{not} have $Z^1(G,\sigma^{p,\mu},L_p(X,\mu)) \subset Z^1_\sigma(G,\C)$ unless $\sigma$ preserves the measure $\mu$.

\section{The case $p=2$}\label{section:p2}
Let $G$ be a topological group and let $p > 0$. We denote by $K^p(G)$ the set of all continuous functions $\psi : G \rightarrow \R_+$ of the form $\psi(g)=\| \alpha_g(0)\|_{L_p}^p$ for some continuous affine isometric action $\alpha$ of $G$ on some $L_p$-space. Note that $K^2(G)$ is the set of all continuous functions on $G$ that are conditionally of negative type.

By using the Gaussian functor, one has the following (classical) result.
\begin{prop} \label{gaussian}
Let $G$ be a topological group. Take $\psi \in K^2(G)$ and $p > 0$. Then there exists a continuous probability measure preserving action $\sigma : G \curvearrowright (X,\mu)$ and a cocycle $c \in Z^1_\sigma(G,\R)$ such that $ \psi(g)^{\frac p 2}=\| c(g) \|_{L_p}^p$ for all $g \in G$. In particular, $\psi^{\frac p 2} \in K^p(G)$ for all $p > 0$.
\end{prop}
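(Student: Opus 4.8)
The plan is to realize the negative-type function $\psi$ by an affine isometric action on a real Hilbert space and then transport it to $L_p$ through the Gaussian functor, absorbing the Gaussian moment constant into a rescaling (allowed since $p$ is fixed). First, because $K^2(G)$ is exactly the cone of continuous conditionally negative-type functions on $G$, I would invoke the standard GNS-type construction to produce a real Hilbert space $H$, a continuous orthogonal representation $\pi : G \to \cO(H)$, and a continuous cocycle $b \in Z^1(G,\pi,H)$ with $b(gh)=b(g)+\pi(g)b(h)$ and $\psi(g)=\|b(g)\|_H^2$ for all $g$; equivalently, $b$ and $\pi$ are the translation and linear parts of any affine isometric Hilbert-space action realizing $\psi$.

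Second, I would apply the Gaussian functor to $\pi$. This produces a standard probability space $(X,\mu)$, a continuous probability measure preserving action $\sigma : G \acts (X,\mu)$ whose Koopman representation is the second quantization of $\pi$, together with a linear $G$-equivariant map $\omega : H \to L_0(X,\mu,\R)$ onto the first Wiener chaos, such that each $\omega(\xi)$ is a centered Gaussian variable of variance $\|\xi\|_H^2$ and $\sigma_g(\omega(\xi))=\omega(\pi(g)\xi)$. Since a centered Gaussian of standard deviation $s$ has finite $p$-th absolute moment $\gamma_p\, s^p$ for every $p>0$, with $\gamma_p=(2\pi)^{-1/2}\int_\R |t|^p e^{-t^2/2}\rd t$, the map $\omega$ takes values in $L_p(X,\mu)$ for all $p$ and satisfies $\|\omega(\xi)\|_{L_p}^p=\gamma_p\,\|\xi\|_H^p$.

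Third, I would set $c(g)=\gamma_p^{-1/p}\,\omega(b(g))$. The cocycle identity for $b$, the equivariance $\sigma_g\circ\omega=\omega\circ\pi(g)$, and the linearity of $\sigma_g$ give $c(gh)=c(g)+\sigma_g(c(h))$, so $c\in Z^1(G,\sigma,\R)$; continuity of $c$ into $L_p$ follows from continuity of $b$ into $H$ together with $\|\omega(\xi-\eta)\|_{L_p}=\gamma_p^{1/p}\|\xi-\eta\|_H$. Then $\|c(g)\|_{L_p}^p=\gamma_p^{-1}\|\omega(b(g))\|_{L_p}^p=\|b(g)\|_H^p=\psi(g)^{p/2}$, which is the desired identity. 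For the final assertion, since $\sigma$ preserves $\mu$ its Radon--Nikodym cocycle is trivial, so $\sigma^{p,\mu}_g$ is simply $f\mapsto\sigma_g(f)$ and $\alpha_g(f)=\sigma^{p,\mu}_g(f)+c(g)$ defines a continuous affine isometric action on $L_p(X,\mu)$ with $\|\alpha_g(0)\|_{L_p}^p=\psi(g)^{p/2}$, witnessing $\psi^{p/2}\in K^p(G)$.

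The only genuine content is the Gaussian functor and the moment formula $\|\omega(\xi)\|_{L_p}^p=\gamma_p\|\xi\|_H^p$; the normalization by $\gamma_p$ removes the dimensional constant and is the point that would fail if $p$ were allowed to vary, since a single Gaussian cannot have $\|\omega(\xi)\|_{L_p}^p=\|\xi\|_H^p$ simultaneously for all $p$. I expect the mildly delicate verifications to be the continuity of $\sigma$ and of $c$ and the $L_p$-integrability of the first-chaos variables, all of which are handled uniformly by the equivalence of $L_p$ norms on the first Wiener chaos.
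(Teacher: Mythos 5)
Your proposal is correct and follows essentially the same route as the paper: realize $\psi$ by a Hilbert-space cocycle via the GNS construction for conditionally negative-type functions, push it through the Gaussian functor into the first Wiener chaos, and use the Gaussian moment formula. In fact you are slightly more careful than the paper's own proof, which ends with the identity $\|\widehat{c}(g)\|_{L_p}^p = C_p\|c(g)\|^p$ and leaves implicit the rescaling by $C_p^{-1/p}$ that you carry out explicitly via the constant $\gamma_p$.
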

\begin{proof}
By definition, there exists an orthogonal representation $\pi : G \rightarrow  \mathcal{O}(H)$ on some Hilbert space $H$ and a cocycle $c \in Z^1(G,\pi, H)$ such that $\psi(g)=\|c(g)\|^2$ for all $g \in G$. Let $\sigma_\pi : G \curvearrowright (X,\mu)$ be the Gaussian action associated to $\sigma$. This means that there exists a linear map $\xi \mapsto \widehat{\xi} \in L_0(X,\mu,\R)$ such that $\widehat{\xi}$ is a centered Gaussian random variable of variance $\| \xi \|^2$ for all $\xi \in H$, and that $\sigma_{\pi}(\widehat{\xi})=\widehat{\pi(g) \xi}$ for all $\xi \in H$. Let $\widehat{c}(g)=\widehat{c(g)} \in L_p(X,\mu)$ for all $g \in G$. Then $\widehat{c}$ is a cocycle for $\sigma_{\pi}$ and a computation shows that $\| \widehat{c}(g) \|^p_{L_p}=C_p \| c(g)\|^p$ for all $g \in G$ and some constant $C_p > 0$.
\end{proof}
\begin{cor}
For every topological group $G$, we have $K^2(G) \subset K^p(G)$ for all $p \geq 2$.
\end{cor}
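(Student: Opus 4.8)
The plan is to prove the corollary $K^2(G) \subset K^p(G)$ for $p \geq 2$ as an essentially immediate consequence of Proposition \ref{gaussian}, exploiting the fact that for $p \geq 2$ the exponent $\tfrac{p}{2}$ is at least $1$, which keeps us inside the class $K^p(G)$ without further work.

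\begin{proof}
Fix $p \geq 2$ and let $\psi \in K^2(G)$. By Proposition \ref{gaussian}, the function $\psi^{p/2}$ belongs to $K^p(G)$: there exist a continuous probability measure preserving action $\sigma : G \curvearrowright (X,\mu)$ and a cocycle $c \in Z^1(G,\sigma,\R)$ with $\psi(g)^{p/2} = \|c(g)\|_{L_p}^p$ for all $g \in G$, and the associated affine action realizes $\psi^{p/2}$ as an element of $K^p(G)$. It therefore suffices to show that $\psi^{p/2} \in K^p(G)$ implies $\psi \in K^p(G)$, and for this I will use that $t \mapsto t^{2/p}$ is a concave, nondecreasing function on $\R_+$ vanishing at $0$ (since $0 < 2/p \leq 1$). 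The key point is a Schoenberg-type stability: composing a function in $K^p(G)$ on the outside with such a function again lands in $K^p(G)$.

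More concretely, I would argue as follows. Writing $\varphi = \psi^{p/2} \in K^p(G)$, I want $\varphi^{2/p} = \psi \in K^p(G)$. For the class $K^p(G)$ with $0 < p \leq 2$, one has the classical fact (going back to the subordination/Bernstein--Schoenberg theory) that if $\varphi \in K^p(G)$ and $0 < s \leq 1$, then $\varphi^s \in K^p(G)$; applying this with $s = 2/p \leq 1$ gives exactly $\psi \in K^p(G)$. The mechanism is that a nonnegative continuous function of the form $g \mapsto \|c(g)\|_{L_p}^p$ is an $L_p$-negative-type kernel, and raising to a power $s \in (0,1]$ preserves $L_p$-negative-typeness via the integral representation $t^s = C_s \int_0^\infty (1 - e^{-\lambda t}) \lambda^{-1-s}\,d\lambda$ together with the standard construction that produces, from a negative type function, a new $L_p$ action whose squared-distance (here $p$-th power distance) function is the subordinated one.

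The main obstacle is to make the subordination step rigorous at the level of $L_p$ spaces rather than just negative-type kernels, since for $p \neq 2$ the bijective dictionary between actions and kernels is more delicate. Concretely, I expect the cleanest route is to not invoke an abstract subordination theorem but instead to go back to the Gaussian picture: starting from $\psi \in K^2(G)$ with $\psi = \|c(\cdot)\|^2$ for a Hilbert cocycle $c$, the random variable $\widehat{c(g)}$ produced in Proposition \ref{gaussian} is Gaussian, and one computes directly that $\|\widehat{c}(g)\|_{L_p}^p = C_p \|c(g)\|^p = C_p\, \psi(g)^{p/2}$. To land on $\psi$ itself rather than $\psi^{p/2}$, I would instead apply Proposition \ref{gaussian} with the exponent changed, or renormalize: the proposition as stated already delivers $\psi^{p/2} \in K^p(G)$, and then the closure of $K^p(G)$ under the outer power $2/p \in (0,1]$ is precisely the substance to be checked. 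I would verify that last closure by exhibiting the subordinated $L_p$-cocycle explicitly, which is the step I anticipate requiring the most care; all the remaining manipulations are routine once the subordination is in place.
\end{proof}
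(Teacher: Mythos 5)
There is a genuine gap. Your argument reduces the corollary to the claim that $K^p(G)$ is closed under outer fractional powers, i.e.\ that $\varphi \in K^p(G)$ and $0 < s \leq 1$ imply $\varphi^s \in K^p(G)$, which you then apply with $\varphi = \psi^{p/2}$ and $s = 2/p$. You never prove this claim, and you yourself flag it as ``precisely the substance to be checked.'' Worse, the ``classical fact'' you invoke for it is stated for the range $0 < p \leq 2$, while the corollary concerns $p \geq 2$; in that range the Schoenberg--Bernstein machinery does \emph{not} apply to $L_p$-kernels, because for $p > 2$ the kernel $(x,y) \mapsto \|x-y\|_p^p$ on $L_p$ is not of negative type, so there is no direct ``subordination inside $K^p$'' producing a new isometric $L_p$-action from an old one. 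Your fallback suggestion (``go back to the Gaussian picture'') does not close the gap either: the Gaussian computation only ever produces functions of the form $\phi^{p/2}$ with $\phi \in K^2(G)$, and you apply it to $\phi = \psi$, which yields $\psi^{p/2} \in K^p(G)$, not $\psi$.

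The repair is simply to reverse the order of the two operations, which is what the paper does. Since $K^2(G)$ is exactly the class of continuous functions of conditionally negative type, and $x \mapsto x^{\alpha}$ is a Bernstein function for $0 < \alpha \leq 1$, the power $\psi^{2/p}$ stays in $K^2(G)$ (this is the legitimate, genuinely classical Schoenberg-type closure, available only at the level of $p=2$). Then apply Proposition \ref{gaussian} to the function $\psi^{2/p} \in K^2(G)$: it gives $\bigl(\psi^{2/p}\bigr)^{p/2} = \psi \in K^p(G)$. In other words, the subordination must be performed inside $K^2(G)$ \emph{before} passing through the Gaussian functor, not inside $K^p(G)$ afterwards; done in that order, no closure property of $K^p(G)$ for $p > 2$ is ever needed.
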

\begin{proof}
The function $x \mapsto x^\alpha$ is a Bernstein function for all $0 < \alpha \leq 1$. It follows that for every $\psi \in K^2(G)$, we have $\psi^{\alpha} \in K^2(G)$, hence $\psi^{\alpha \frac p 2} \in K^p(G)$. The conclusion follows by taking $\alpha=\frac{2}{p}$.
\end{proof}

\section{Proof of the main theorem}\label{section:mainthm}

\begin{prop} \label{reduction positive}
Let $G$ be a topological group. For every $p > 0$ and every $\psi \in K^p(G)$, there exists a continuous nonsingular action $\sigma : G \curvearrowright (X,\mu)$ and a cocycle $c \in Z^1(G, \sigma^{p,\mu},L_p(X,\mu))$ such that $\psi(g)=\| c(g)\|_{L_p}^p$ for all $g \in G$.
\end{prop}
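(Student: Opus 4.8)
The plan is to unwind the definition of $K^p(G)$, reduce the affine action to its linear part plus a cocycle, and then remove the unitary twist that the Banach--Lamperti description introduces by means of a skew-product construction. First I would record that $\psi \in K^p(G)$ means $\psi(g) = \|\alpha_g(0)\|_{L_p}^p$ for some continuous affine isometric action $\alpha$ on some $L_p(X_0,\mu_0)$. Using the semidirect product decomposition of $\Isom(L_p(X_0,\mu_0))$ recalled in Section~\ref{section:preliminaries}, write $\alpha_g = \pi(g)(\cdot) + c(g)$ with $\pi$ a linear isometric representation and $c \in Z^1(G,\pi,L_p(X_0,\mu_0))$; then $\alpha_g(0) = c(g)$, so $\psi(g) = \|c(g)\|_{L_p}^p$. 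The entire difficulty is that the linear part $\pi$ need not be of the pure form $\sigma^{p,\mu}$ coming from a nonsingular action.

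For $p = 2$ this is dealt with immediately: here $\psi \in K^2(G)$, so Proposition~\ref{gaussian} applied with exponent $2$ yields a continuous probability-measure-preserving action $\sigma$ and a cocycle $c \in Z^1(G,\sigma,\R)$ with $\psi(g) = \|c(g)\|_{L_2}^2$. Since $\sigma$ preserves $\mu$, the representation $\sigma^{2,\mu}$ is the Koopman representation and $Z^1(G,\sigma,\R) \subset Z^1(G,\sigma^{2,\mu},L_2)$, which is exactly the asserted conclusion.

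For $p \neq 2$ I would invoke the Banach--Lamperti description recalled above to write $\pi(g) = \omega(g)\,\sigma^{p,\mu_0}_g$ for a continuous nonsingular action $\sigma \colon G \curvearrowright (X_0,\mu_0)$ and a cocycle $\omega \in Z^1(G,\sigma,\T)$. The key idea is to absorb $\omega$ by passing to the skew-product action $\widehat\sigma = \sigma \rtimes \omega$ on $(X_0 \times \T,\, \mu_0 \otimes m)$, where $m$ is the normalized Haar measure of the compact group $\T$. Because $\widehat\sigma$ acts on the $\T$-factor by measure-preserving translations, its Radon--Nikodym cocycle is that of $\sigma$ pulled back, so the formula for $\widehat\sigma^{p,\mu_0\otimes m}$ involves only $D(g)(x)$. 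I then introduce the map $W \colon L_p(X_0,\mu_0) \to L_p(X_0\times\T,\mu_0\otimes m)$ given by $(Wf)(x,t) = f(x)\chi(t)$, where $\chi$ is the character of $\T$ corresponding to $\omega$. Since $|\chi| \equiv 1$ and $m$ is a probability measure, $W$ is isometric; and unwinding the definitions, the translation by $\omega(g)$ in the $\T$-variable is precisely what turns $\chi(\widehat\sigma_g^{-1}(x,t))$ into $\chi(t)\,\omega(g)(x)$, producing the multiplier $\omega(g)$ and thereby the intertwining relation $\widehat\sigma^{p,\mu_0\otimes m}_g \circ W = W \circ \pi(g)$. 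Granting this, I set $\widehat c(g) = W(c(g))$; the cocycle identity for $\widehat c$ relative to $\widehat\sigma^{p,\mu_0\otimes m}$ follows formally from that of $c$ together with the intertwining, and $\|\widehat c(g)\|_{L_p}^p = \|c(g)\|_{L_p}^p = \psi(g)$ since $W$ is isometric. Thus $\widehat\sigma$ and $\widehat c$ are the required nonsingular action and cocycle.

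The conceptual heart is the single observation that the skew product converts the $\T$-valued Banach--Lamperti cocycle into an honest translation action, whose induced isometries restricted to the one character $\chi$ reproduce the twisted representation $\pi$. Accordingly, I expect the only real obstacle to be bookkeeping in the intertwining computation — tracking inverses, the additive-versus-multiplicative presentation of $\T$, and the Radon--Nikodym factor $D(g)^{1/p}$ — rather than any genuine conceptual difficulty.
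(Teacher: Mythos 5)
Your proposal is correct and takes essentially the same approach as the paper: reduce to $p \neq 2$ via Proposition~\ref{gaussian}, decompose the affine action, apply the Banach--Lamperti description $\pi(g) = \omega(g)\sigma^{p,\mu}_g$, and absorb $\omega$ by passing to the skew product $\sigma \rtimes \omega$ on $(X \times \T, \mu \otimes m)$. Your embedding $W f = f \otimes \chi$ (with $\chi$ the identity character of $\T$) is exactly the paper's device of multiplying $c(g)$ by the coordinate function $u(x,z)=z$, and your intertwining relation $\widehat\sigma^{p}_g \circ W = W \circ \pi(g)$ is precisely the paper's observation that $\tilde\sigma_g(u)u^* = \omega(g) \otimes 1$.
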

\begin{proof}
We may assume that $p \neq 2$ thanks to Proposition \ref{gaussian}. By definition, there exists an affine isometric action $\alpha : G \curvearrowright L_p(X,\mu)$ for some probability space $(X,\mu)$ such that $\psi(g)=\| \alpha_g(0)\|_{L_p}^p$ for all $g \in G$. Write $\alpha_g(f)=\pi_g(f)+c(g)$ where $\pi$ is an isometric linear representation of $G$ on $L_p(X,\mu)$ and $c \in Z^1(G,\pi,L_p(X,\mu))$. Since $p \neq 2$, we can write $\pi(g)=\omega(g) \sigma^{p,\mu}_g$ where $\sigma : G \curvearrowright (X,\mu)$ is some nonsingular action and $\omega \in Z^1_\sigma(G,\T)$. Consider the skew-product nonsinglar action $\tilde{\sigma}=\sigma \rtimes \omega : G \curvearrowright (X \times \T, \mu \otimes m)$ where $m$ is the Haar measure of $\T$. Observe that $\tilde{\sigma}_g(u)u^*=\omega(g) \otimes 1$ where $u$ is the function on $X \times \T$ given by $u(x,z)=z$ for all $(x,z) \in X \times \T$. It follows that $\tilde{c}: g \mapsto u c(g)$ defines an element $\tilde{c} \in Z^1(G,\tilde{\sigma}^p, L_p(X \times \T,\mu \otimes m))$ such that $\| \tilde{c}(g)\|_p=\| c(g)\|_p$ for all $g \in G$, where $m$ is the Haar measure of $\T$.
\end{proof}

\begin{lemma} \label{integration lemma}
Take $0 < p < q < \infty$. Let $\varphi : \C \rightarrow \R$ be a nonzero, radial, compactly supported, Lipschitz function. Then there exists a constant $C(q) > 0$ such that for all $w \in \C$, we have
\begin{equation}
\int_{\C} \int_0^\infty |\varphi(z+\lambda^{-1/p}w) - \varphi(z)|^q \rd\lambda \rd z = C(q) |w|^p
\end{equation}
\end{lemma}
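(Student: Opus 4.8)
The plan is to fix $\varphi$ and set $F(w) = \int_{\C}\int_0^\infty |\varphi(z + \lambda^{-1/p}w) - \varphi(z)|^q \rd\lambda \rd z$, and to show that the precise exponent $\lambda^{-1/p}$ together with the radiality of $\varphi$ force $F$ to be a constant multiple of $|w|^p$; the finiteness and positivity of that constant are then checked separately. Throughout, the integrand is non-negative, so Tonelli's theorem justifies every change of variables and every interchange of integrals without further comment, and I only need to produce the value of the constant on the unit vector.

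The first key step is homogeneity. For $t > 0$, substituting $\lambda = t^p \mu$ in the inner integral turns $\lambda^{-1/p} t w$ into $\mu^{-1/p} w$ and produces a Jacobian factor $t^p$, whence $F(tw) = t^p F(w)$. The second key step is rotation invariance. For a rotation $R_\alpha$ of $\C$ (multiplication by $e^{i\alpha}$), the substitution $z = R_\alpha z'$ leaves $\rd z$ invariant, while $\varphi(R_\alpha z' + \lambda^{-1/p}R_\alpha w) = \varphi\bigl(R_\alpha(z' + \lambda^{-1/p}w)\bigr) = \varphi(z' + \lambda^{-1/p}w)$ by radiality; hence $F(R_\alpha w) = F(w)$. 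Combining the two, for any $w \neq 0$ I write $w = |w|\,R_\theta(1)$ and obtain $F(w) = |w|^p F(1)$, so the lemma holds with $C(q) := F(1)$, provided this value is finite and strictly positive.

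It remains to estimate $F(1)$, and this is where the hypothesis $q > p$, the Lipschitz condition, and the compact support each enter; I regard this as the only genuinely technical point. Assume $\operatorname{supp}\varphi \subseteq \{|z| \leq R\}$ and let $L$ be a Lipschitz constant of $\varphi$. I split the outer $\lambda$-integral at $\lambda_0 = (2R)^{-p}$. On the regime $\lambda \geq \lambda_0$ (displacement $\lambda^{-1/p} \leq 2R$) the Lipschitz bound gives $|\varphi(z + \lambda^{-1/p}) - \varphi(z)|^q \leq (L\lambda^{-1/p})^q$, and since for such $\lambda$ the integrand in $z$ is supported in the fixed ball $\{|z|\leq 3R\}$, the inner integral is $O(\lambda^{-q/p})$; as $q/p > 1$, the tail $\int_{\lambda_0}^\infty \lambda^{-q/p}\rd\lambda$ converges, which is exactly the role played by the hypothesis $q > p$. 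On the regime $0 < \lambda < \lambda_0$ (displacement $\lambda^{-1/p} > 2R$) the two translates of $\operatorname{supp}\varphi$ are disjoint, so the inner integral equals exactly $2\|\varphi\|_q^q$; integrating this constant over the finite interval $(0,\lambda_0)$ contributes $2\lambda_0\|\varphi\|_q^q$, which is finite and, because $\varphi$ is nonzero, strictly positive. This shows $0 < F(1) < \infty$ and completes the proof.
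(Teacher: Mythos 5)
Your proof is correct and follows essentially the same route as the paper's: reduce to $|w|=1$ via the substitution $\lambda \mapsto |w|^p\lambda$ together with radiality, then prove finiteness by combining the Lipschitz bound (for displacements $\lambda^{-1/p}$ below the support scale, integrable at infinity since $q/p>1$) with a uniform bound for large displacements. Your explicit split at $\lambda_0=(2R)^{-p}$, where disjointness of the translated supports gives the inner integral exactly as $2\|\varphi\|_q^q$, is a slightly cleaner packaging that also makes the positivity $C(q)>0$ immediate, whereas the paper deduces it from $\varphi$ being non-constant.
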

\begin{proof}
Let $S$ be the Lebesgue measure of the support of $\varphi$, $M=\| \varphi \|_\infty$ and $K$ the Lipschitz constant of $\varphi$. Then we have $|\varphi(z+\lambda^{-1/p}) - \varphi(z)| \leq \min( 2M,K \lambda^{-1/p})$ for all $z \in \C$ and all $\lambda \in \R^*_+$. Therefore, we have
$$\int_{\C}  |\varphi(z+\lambda^{-1/p}) - \varphi(z)|^q \rd z \leq 2S \min( (2M)^q , K^q \lambda^{-q/p}).$$
Since $q > p$, the function $\lambda \mapsto \min( (2M)^q, K^q \lambda^{-q/p})$ is integrable on $\R^*_+$. Therefore, we can define 
$$C(q)=\int_{\C} \int_0^\infty |\varphi(z+\lambda^{-1/p}) - \varphi(z)|^q \rd\lambda \rd z < +\infty.$$
Since $\varphi$ is not constant, we have $C(q) > 0$. Finally, the statement for all $w \in \C$ follows from the change of variable $\lambda \mapsto |w|^p\lambda$ and the fact that $\varphi$ is radial.
\end{proof}

\begin{thm}\label{main precise form}
Let $G$ be a topological group. Take $0 < p < q < \infty$. Then for every $\psi \in K^p(G)$, there exists a continuous measure preserving action $\sigma : G \curvearrowright (Y,\nu)$ and a function $h \in L_\infty(Y,\nu)$ such that $b(g)=\sigma_g(h)-h \in L_q(Y,\nu)$ with $\psi(g)=\| b(g)\|_{L_q}^q$ for all $g \in G$. In particular $\psi \in K^q(G)$.
\end{thm}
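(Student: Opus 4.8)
The plan is to assemble the three ingredients prepared above. By Proposition \ref{reduction positive} I may fix a continuous nonsingular action $\sigma : G \curvearrowright (X,\mu)$ and a cocycle $c \in Z^1(G,\sigma^{p,\mu},L_p(X,\mu))$ with $\psi(g)=\|c(g)\|_{L_p}^p$, so the problem reduces to manufacturing from $c$ a measure preserving action and a bounded function $h$ whose coboundary $b$ satisfies $\|b(g)\|_{L_q}^q=\psi(g)$. The guiding observation is that, by Lemma \ref{integration lemma}, integrating $|\varphi(z+\lambda^{-1/p}w)-\varphi(z)|^q$ over $(z,\lambda)\in\C\times\R^*_+$ reproduces $C(q)|w|^p$; so feeding $w=c(g)(x)$ and integrating also over $x\in X$ should convert $\|c(g)\|_{L_p}^p$ into the $q$-th power of an $L_q$-norm over the enlarged space $Y=X\times\R^*_+\times\C$.

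The crux is the second step. I would pass to the Maharam extension $\widetilde{\sigma}$ of $\sigma$, a measure preserving action on $(X\times\R^*_+,\mu\otimes\rd\lambda)$, and check that
\[
\gamma(g)(x,\lambda):=\lambda^{-1/p}\,c(g)(x)
\]
defines a cocycle $\gamma\in Z^1(G,\widetilde{\sigma},\C)$. This works precisely because the $\lambda$-coordinate of the Maharam extension is rescaled by the Radon--Nikodym cocycle $D(g)$: the weight $\lambda^{-1/p}$ then generates exactly the factor $D(g)^{1/p}$, so that $\widetilde{\sigma}_g(\gamma(h))=\lambda^{-1/p}\,\sigma^{p,\mu}(g)c(h)$ and the identity $c(gh)=c(g)+\sigma^{p,\mu}(g)c(h)$ transforms verbatim into $\gamma(gh)=\gamma(g)+\widetilde{\sigma}_g(\gamma(h))$. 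In short, the Maharam direction is engineered to absorb the non measure preserving twist of $\sigma^{p,\mu}$ into an honest cocycle for a measure preserving action.

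I would then form the skew-product action $\Sigma=\widetilde{\sigma}\rtimes\gamma$ on $Y=(X\times\R^*_+)\times\C$ with the measure $\mu\otimes\rd\lambda\otimes\rd z$; since $\widetilde{\sigma}$ preserves $\mu\otimes\rd\lambda$ and $\gamma$ is $\C$-valued, $\Sigma$ preserves this product measure. Taking $h(x,\lambda,z)=\varphi(z)$, which lies in $L_\infty(Y)$ as $\varphi$ is bounded, the skew-product formula gives $\Sigma_g(h)(x,\lambda,z)=\varphi\bigl(z+\lambda^{-1/p}c(g)(x)\bigr)$, so that $b(g):=\Sigma_g(h)-h=\varphi(z+\lambda^{-1/p}c(g)(x))-\varphi(z)$. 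By Fubini and Lemma \ref{integration lemma} with $w=c(g)(x)$,
\begin{equation*}
\|b(g)\|_{L_q}^q=\int_X\!\int_\C\!\int_0^\infty\bigl|\varphi(z+\lambda^{-1/p}c(g)(x))-\varphi(z)\bigr|^q\rd\lambda\,\rd z\,\rd\mu(x)=C(q)\,\psi(g).
\end{equation*}
Rescaling the Lebesgue measure on the $\C$-factor by $C(q)^{-1}$ (which alters neither the measure class, nor $h\in L_\infty$, nor measure preservation of $\Sigma$) removes the constant and yields $\psi(g)=\|b(g)\|_{L_q}^q$. Since $b$ is then a coboundary for the Koopman representation of the measure preserving action $\Sigma$ on $L_q(Y,\nu)$, this gives $\psi\in K^q(G)$.

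The hard part is exactly the cocycle verification of the second step: understanding how the weight $\lambda^{-1/p}$ couples to the Radon--Nikodym scaling of the Maharam extension so as to turn $c$ into a genuine cocycle for a \emph{measure preserving} action. Once this is in place, the rest---measure preservation of the skew product, the Fubini computation, continuity of $g\mapsto\Sigma_g$ and of $g\mapsto b(g)$ into $L_q$, and standardness and $\sigma$-finiteness of $(Y,\nu)$---is routine.
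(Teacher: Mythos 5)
Your proposal is correct and follows essentially the same route as the paper's own proof: reduce via Proposition \ref{reduction positive}, pass to the Maharam extension and observe that $\gamma(g)(x,\lambda)=\lambda^{-1/p}c(g)(x)$ is a genuine cocycle for that measure preserving action (the weight $\lambda^{-1/p}$ absorbing the Radon--Nikodym factor $D(g)^{1/p}$, exactly as the paper's $\widetilde c$), then take the skew product by this cocycle and apply Lemma \ref{integration lemma} to $h(x,\lambda,z)=\varphi(z)$. The only (cosmetic) difference is that the paper states the conclusion ``up to a constant $C(q)>0$'', whereas you explicitly remove the constant by rescaling the Lebesgue measure on the $\C$-factor, which is a valid finishing touch.
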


\begin{proof}
By Proposition \ref{reduction positive}, there exists a nonsingular action $\sigma : G \curvearrowright (X,\mu)$ and a cocycle $c \in Z^1(G, \sigma^{p,\mu},L_p(X,\mu))$ such that $\psi(g)=\| c(g)\|_{L_p}^p$ for all $g \in G$. Let $\widetilde{\sigma} :  G \acts (\widetilde{X},\widetilde{\mu})$ be the Maharam extension of $\sigma$. This means that $(\widetilde{X},\widetilde{\mu}) = (X \times \R^*_+,\mu \otimes \rd \lambda)$ where $\rd \lambda$ is the restriction to $\R^*_+$ of the Lebesgue measure of $\R$ and $\widetilde{\sigma} :  G \acts (\widetilde{X},\widetilde{\mu})$ is the measure-preserving action given by $g(x,\lambda) = (gx,\frac{d g^{-1} \mu}{d\mu}(x) \lambda)$. Define $\widetilde c \in Z^1_{\widetilde{\sigma}}(G,\C)$ by the formula $\widetilde c(g,x,\lambda) = \lambda^{-\frac 1 p} c(g,x)$ (observe that $\widetilde{c}$ indeed satisfies the cocycle relation thanks to the term $\lambda^{-1/p}$). Let $\rho : G \acts (Y,\nu)$ be the skew-product action of $\widetilde{\sigma}$ by $\widetilde c$. This is the measure space $(Y,\nu)=(\widetilde X \times \C, \widetilde{\mu} \otimes {\rm d}z ) $ and $\rho$ is the measure preserving action given by $g(\widetilde x,z) =  (g \widetilde x,z+\widetilde c(g^{-1},\widetilde x))$. Let $\varphi\colon \C \to \R$ be a nonzero, radial, compactly supported, Lipschitz function. Define a function $h \in L_\infty(Y,\nu)$ by $h(\widetilde x,z)=\varphi(z)$, and let $b(g) = \rho_g(h) - h$ for all $g \in G$. Then Lemma \ref{integration lemma} shows that $b$ is a cocycle with values in $L_q(Y,\nu)$ that satisfies the conclusion of the theorem up to a constant $C(q) > 0$.
\end{proof}

\begin{rem}
The idea of the proof of Theorem \ref{main precise form} and of the cocycle $\widetilde{c}$ becomes very natural if one uses the notion of modular bundle and Haagerup's canonical $L_p$-spaces as explained in \cite[Sections A.2 and A.3]{arano2019ergodic}. Indeed, by viewing the canonical $L_p$-space $L_p(X)$ as a subspace of $L_0(\mathrm{Mod}(X))$, the isometric linear representation $\sigma^p : G \curvearrowright L_p(X)$ associated to some nonsingular action $\sigma : G \curvearrowright X$ is identified with the restriction to $L_p(X)$ of the Maharam extension $\mathrm{Mod}(\sigma) : G \curvearrowright \mathrm{Mod}(X)$. We can therefore identify every cocycle $c \in Z^1(G,\sigma^p,L_p(X))$ with a cocycle $\widetilde{c} \in Z^1_{\mathrm{Mod}(\sigma)}(G,\C)$. This is crucial in order to be able to use the skew-product construction.
\end{rem}

\begin{proof}[Proof of Corollary \ref{cor:isom embedding}]
Let $G=\Isom(L_p)$ with its canonical affine isometric action on $L_p$. By applying Theorem \ref{thm:main}, we obtain a continuous homomorphism $\Psi : \Isom(L_p) \rightarrow \Isom(L_q)$ such that 
$$ \| g(0) \|_{L_p}^p=\| \Psi(g)(0) \|_{L_q}^q \; \text{ for all } g \in \Isom(L_p).$$
We have to show that $\Psi$ is a homeomorphism on its range. Take $(g_n)_{n \in \N}$ a sequence in $\Isom(L_p)$ and suppose that $\Psi(g_n) \to \id$. We have to show that $g_n \to \id$. Take $f \in L_p$ and let $\tau_f \in \Isom(L_p)$ be the translation by $f$. Then 
$$ \| g_n(f)-f\|^p_{L_p}=\| (\tau_f^{-1} \circ g_n \circ \tau_f)(0)\|^p_{L_p} = \| \Psi(\tau_f^{-1} \circ g_n \circ \tau_f)(0)\|^q_{L_q}.$$
Since $\Psi(g_n) \to \id$, we also have $\Psi(\tau_f^{-1} \circ g_n \circ \tau_f)= \Psi(\tau_f)^{-1} \circ \Psi(g_n) \circ \Psi(\tau_f) \to \id$. This yields
$$ \lim_n \| g_n(f)-f\|^p_{L_p}=\lim_n  \| \Psi(\tau_f^{-1} \circ g_n \circ \tau_f)(0)\|^q_{L_q} =0.$$
Since this holfs for all $f \in L_p$, we conclude that $g_n \to \id$ as we wanted.
\end{proof}

\begin{question} Is Theorem \ref{main precise form} still true when $q=p$? Can we at least realize $\psi$ with an affine isometric action whose linear part comes from a measure preserving action of $G$? One can show that if a measurable function $\varphi : \R \rightarrow \R$ satisfies 
$$ \int_\R \int_0^\infty |\varphi(x+\lambda^{-1/p})-\varphi(x)|^p \rd \lambda \rd x < +\infty$$
for some $p \geq 1$, then $\varphi$ is almost surely constant. Therefore, our method for the proof of Theorem \ref{main precise form} cannot work when $q=p$.
\end{question}

\begin{question} Take $0 < p,q < \infty$. Is it true that $\Isom(L_p)$ embeds as a closed subgroup of $\Isom(L_q)$ if and only if $p \leq \max(2,q)$?
\end{question}

\section{Formal coboundaries}\label{section:formal}
By the Banach-Lamperti theorem, an action by linear isometries on $L_p(X,\mu)$ ($p \neq 2$) comes from a non-singular action on $(X,\mu)$ and a cocyle with values in $\T$, and in particular it corresponds to an action by linear isometries on $L_q(X,\mu)$ for every other $q$. However, in the proof of Theorem \ref{thm:main}, the linear part of the action $\beta$ on $L_q$ is very different from the linear part of the original action $\alpha$. It is natural to wonder when the same linear part can be chosen. The purpose of this section is to investigate this question; the main result shows that this is the case when the translation part is a formal coboundary. This is a variant of \cite{MR3590529,lavyOlivier}.

In this section, we will sometimes change the $\sigma$-algebra. To fix notation we consider a standard measure space $(X,\mu)$ with $\sigma$-algebra $\cA$.
\subsection{Facts on the Mazur maps}
The Mazur map $M_{p,q}$ is the nonlinear map given by $M_{p,q}(f)(x) = \sign(f(x))|f(x)|^{\frac p q}$ for every measurable function $f \colon X \to \C$ (where for $z \in \C \setminus \{0\}$, $\sign(z) = \frac{z}{|z|}$ and $\sign(0)=0$). It is well-known that, for every $1\leq p,q<\infty$, the Mazur map is a uniformly continuous homeomorphism from the unit ball of $L_p$ to the unit ball of $L_q$. This follows from the existence, for every $1 \leq p \leq q<\infty$, of a constant $c = c(p,q)$ such that for all $a,b \in \C$
\begin{equation}\label{eq:mazurLipschitz} |\sign(a) |a|^{\frac p q} - \sign(b) |b|^{\frac p q}| \leq c |a-b|^{\frac p q}\end{equation}
and
\begin{equation}\label{eq:mazurhoelder}  |a-b| \leq c\left(|\sign(a) |a|^{\frac p q} - \sign(b) |b|^{\frac p q}|^{\frac q p}+ |a|^{1-\frac p q}|\sign(a) |a|^{\frac p q} - \sign(b) |b|^{\frac p q}|\right).\end{equation}
For example, we have
\begin{lemma}\label{lem:mazur2} Assume that $(X,\mu)$ is a probability space and let $f \in L_p(X,\mu)$ of mean $0$. Then for every $z \in \C$,
 \[  \|M_{p,q}(f) - z\|_q\geq C(p,q) \|f\|_p^{\frac p q}\]
 for a number $C(p,q) > 0$ depending only on $1 \leq p,q<\infty$.
\end{lemma}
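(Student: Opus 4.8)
The plan is to exploit the identity $\|M_{p,q}(f)\|_q = \|f\|_p^{p/q}$, which holds because $\int |M_{p,q}(f)|^q = \int |f|^p$. By the scaling $f \mapsto tf$ (under which $M_{p,q}(f)$, the candidate constant $z$, and both sides of the asserted inequality all rescale by $|t|^{p/q}$, while the mean-zero condition is preserved) I may normalize $\|f\|_p = 1$ and must then prove $\inf_z \|M_{p,q}(f) - z\|_q \geq C(p,q)$. Write $g = M_{p,q}(f)$ and $\delta = \|g - z\|_q$. The difficulty is that the hypothesis $\int f = 0$ is a statement about $f$, whereas $\delta$ concerns $g$; so the first step is to transfer information from $g$ back to $f$. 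For this I would apply the inverse Mazur estimate \eqref{eq:mazurhoelder} pointwise, with $a = f(x)$ and $b = w := \sign(z)|z|^{q/p} = M_{q,p}(z)$, the unique constant satisfying $M_{p,q}(w) = z$. Since $M_{p,q}(a) = g(x)$ and $M_{p,q}(b) = z$, this reads
\[ |f(x) - w| \leq c\left( |g(x) - z|^{q/p} + |f(x)|^{1-\frac p q}|g(x) - z| \right). \]

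Next I would raise to the power $p$, integrate $\rd\mu$, and combine $(A+B)^p \leq 2^{p-1}(A^p + B^p)$ with Hölder's inequality applied to the second term using the conjugate exponents $q/p$ and $q/(q-p)$. The exponents are arranged precisely so that $\int |g-z|^q = \delta^q$, the Hölder factor coming from $|g-z|$ contributes $\delta^p$, and the factor coming from $|f|$ collapses to $\left(\int |f|^p\right)^{(q-p)/q} = 1$. The result is
\[ \|f - w\|_p^p \leq c^p 2^{p-1}\left(\delta^q + \delta^p\right), \]
whence $\|f - w\|_p \leq C_1(\delta + \delta^{q/p})$ for a constant $C_1 = C_1(p,q)$.

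Now the mean-zero hypothesis enters. On a probability space $|w| = \left| \int (w - f)\rd\mu \right| \leq \|w - f\|_1 \leq \|w - f\|_p$, so, using $|z| = |w|^{p/q}$ and the subadditivity of $t \mapsto t^{p/q}$, I obtain $|z| \leq C_2(\delta + \delta^{p/q})$ with $C_2 = C_1^{p/q}$. On the other hand, since $\|g\|_q = 1$ and $z$ is a constant (so $\|z\|_q = |z|$), the reverse triangle inequality gives $\delta \geq |1 - |z||$, hence $|z| \geq 1 - \delta$. Combining the two bounds yields $1 - \delta \leq C_2(\delta + \delta^{p/q})$; as the right-hand side tends to $0$ while the left-hand side tends to $1$ as $\delta \to 0^+$, there is $\delta_0 = \delta_0(p,q) > 0$ below which this is impossible. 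Therefore $\|g - z\|_q \geq \delta_0$ for every $z$, and $C(p,q) := \delta_0$ works.

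The main obstacle is the bookkeeping in the second paragraph: arranging the Hölder split to land on exactly the exponents $q/p$ and $q/(q-p)$ so that the $|f|$-factor becomes $\|f\|_p^p = 1$. A secondary point is that \eqref{eq:mazurhoelder} requires $p \leq q$. For $p = q$ the map is the identity and the estimate is immediate, since $\|f - z\|_p \geq \|f - z\|_1 \geq \left|\int(f-z)\rd\mu\right| = |z|$ and $\|f-z\|_p \geq 1 - |z|$ give $\|f-z\|_p \geq \tfrac12$. For $p > q$ one can run the symmetric argument with the roles of $p$ and $q$ exchanged, or sidestep explicit constants by a compactness argument: were no uniform $C$ to exist, there would be mean-zero $f_n$ with $\|f_n\|_p = 1$ and constants $z_n$ with $\|M_{p,q}(f_n) - z_n\|_q \to 0$; then $\|M_{p,q}(f_n)\|_q = 1$ forces $|z_n| \to 1$, and after extracting a subsequence $M_{p,q}(f_n) \to z_\infty$ in $L_q$ with $|z_\infty| = 1$, so the uniform continuity of the inverse Mazur map $M_{q,p}$ on the unit ball gives $f_n \to M_{q,p}(z_\infty)$, a constant of modulus $1$, contradicting $\int f_n = 0$.
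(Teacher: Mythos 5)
Your proof is correct, but it carries out the key quantitative step differently from the paper, so a comparison is worthwhile. Both arguments share the same skeleton: normalize $\|f\|_p=1$, use $\|M_{p,q}(f)\|_q=\|f\|_p^{p/q}=1$ and the triangle inequality to get $|z|\geq 1-\delta$ where $\delta=\|M_{p,q}(f)-z\|_q$, and use the mean-zero hypothesis in the form $|M_{q,p}(z)|=\left|\int (M_{q,p}(z)-f)\rd\mu\right|\leq \|f-M_{q,p}(z)\|_p$ to force $|z|$ to be small when $\delta$ is small. The difference is in the transfer step, bounding $\|f-M_{q,p}(z)\|_p$ by a function of $\delta$: the paper invokes the uniform continuity of $M_{q,p}$ on the ball of radius $\tfrac32$ in $L_q$ (after first disposing of $|z|\notin[\tfrac12,\tfrac32]$), getting $\|f-M_{q,p}(z)\|_p\leq\omega_{q,p}(\delta)$ and hence the non-explicit constant $C(p,q)=\min(\omega_{q,p}^{-1}(2^{-q/p}),\tfrac12)$; you instead integrate the pointwise inequality \eqref{eq:mazurhoelder} and apply H\"older with exponents $q/p$ and $q/(q-p)$, obtaining the explicit bound $\|f-M_{q,p}(z)\|_p\leq C_1(\delta+\delta^{q/p})$ and therefore an explicit constant, polynomial in the constant $c$ of \eqref{eq:mazurhoelder}. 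What you pay for this explicitness is a case split: your H\"older computation needs $p<q$ strictly (and \eqref{eq:mazurhoelder} needs $p\leq q$), so you handle $p=q$ and $p>q$ separately, whereas the paper's modulus-of-continuity argument is uniform over all $1\leq p,q<\infty$. Your fallbacks do work: for $p>q$ the exchanged-roles argument is in fact simpler than you suggest, since only \eqref{eq:mazurLipschitz} is needed (it gives $|f(x)-M_{q,p}(z)|\leq c|M_{p,q}(f)(x)-z|^{q/p}$ pointwise, hence $\|f-M_{q,p}(z)\|_p\leq c\,\delta^{q/p}$ with no H\"older step), and your compactness alternative is essentially the paper's uniform-continuity proof rewritten in contrapositive, qualitative form.
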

\begin{proof} By homogeneity we can assume that $\|f\|_p=1$. We can assume that $\frac 1 2 \leq |z| \leq \frac 3 2$ as otherwise
 \[\|M_{p,q}(f) - z\|_q \geq \left|\|M_{p,q}(f)\|_q - |z| \right| \geq \frac 1 2.\]
 If $\omega_{q,p}$ is the modulus of uniform continuity of the restriction of the Mazur map $M_{q,p}$ to the ball of radius $\frac 3 2$ in $L_q$, we obtain
 \[ |z|^{\frac q p}\leq \|f-M_{q,p}(z)\|_a\leq \omega_{p,q}(\|M_{p,q}(f)-z\|).\]
 The first inequaliy is just the inequality $|\int F d\mu| \leq \|F\|_p$. So we obtain the lemma with $C(p,q) = \min(\omega_{p,q}^{-1}(2^{- \frac q p}),2^{-1})$.
\end{proof}
If we go outside of the unit ball, the relative position of $p$ and $q$ matter. For example, the next result is not true if $p>q$.
\begin{lemma}\label{lem:mazur} Let $1 \leq p \leq q < \infty$. If $f,g\colon X\to \C$ are two measurable functions such that $f-g \in L_p$, then $M_{p,q}(f) - M_{p,q}(g) \in L_q$.\end{lemma}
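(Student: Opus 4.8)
The plan is to reduce everything to the pointwise Lipschitz-type estimate \eqref{eq:mazurLipschitz} and then integrate. The crucial observation is that \eqref{eq:mazurLipschitz} is nothing but the statement that the scalar Mazur map $a \mapsto \sign(a)|a|^{\frac p q}$ is $\frac p q$-Hölder on $\C$ with constant $c = c(p,q)$. Since $M_{p,q}(f)(x) = \sign(f(x))|f(x)|^{\frac p q}$ is obtained by applying this scalar map to $f(x)$, the inequality \eqref{eq:mazurLipschitz} applied with $a = f(x)$ and $b = g(x)$ gives, for almost every $x$,
\[ |M_{p,q}(f)(x) - M_{p,q}(g)(x)| \leq c\,|f(x) - g(x)|^{\frac p q}.\]

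First I would record this pointwise bound, then raise it to the $q$-th power. Because $\left(|f(x)-g(x)|^{\frac p q}\right)^q = |f(x)-g(x)|^p$, this yields the pointwise inequality $|M_{p,q}(f)(x) - M_{p,q}(g)(x)|^q \leq c^q |f(x)-g(x)|^p$. Integrating over $(X,\mu)$ and using that $f - g \in L_p$ gives
\[ \int_X |M_{p,q}(f) - M_{p,q}(g)|^q \rd\mu \;\leq\; c^q \int_X |f-g|^p \rd\mu \;=\; c^q \|f-g\|_p^p \;<\; +\infty,\]
which is exactly the assertion that $M_{p,q}(f) - M_{p,q}(g) \in L_q$.

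I do not expect any genuine obstacle here: the argument is a one-line pointwise estimate followed by integration. The only point that deserves attention is that the whole scheme hinges on $p \leq q$, since it is precisely in this regime that the Hölder exponent $\frac p q \leq 1$ and the inequality \eqref{eq:mazurLipschitz} are available; when $p > q$ the scalar Mazur map fails to be Hölder (indeed it is only locally Hölder away from the origin, with the behaviour near $0$ governed by \eqref{eq:mazurhoelder} rather than \eqref{eq:mazurLipschitz}), which is why the statement is false in that case, as already noted before the lemma.
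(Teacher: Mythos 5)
Your proof is correct and is essentially identical to the paper's own argument: both apply the pointwise H\"older estimate \eqref{eq:mazurLipschitz} with $a=f(x)$, $b=g(x)$, raise to the power $q$, and integrate to get $\|M_{p,q}(f)-M_{p,q}(g)\|_{L_q} \leq c\|f-g\|_{L_p}^{p/q}$. Your closing remark about why the argument requires $p \leq q$ is a nice addition, matching the paper's comment preceding the lemma.
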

\begin{proof}
Replacing $a$ by $f(x)$, $b$ by $g(x)$ in \eqref{eq:mazurLipschitz}, raising to the power $q$ and integrating with respect to $x$, we get
\[ \|M_{p,q}(f) - M_{p,q}(g)\|_{L_q} \leq c \|f-g\|_{L_p}^{\frac p q}.\]
\end{proof}

\begin{lemma}\label{thm:caspmp} Let $1 \leq p \leq q < \infty$. Let $(X,\cA,\mu)$ be a $\sigma$-finite measure space, and $\cB \subset \cA$ be a $\sigma$-subalgebra. Assume that $f \in L_0(X,\cA,\mu)$ satisfies that, for every $h_1 \in L_0(X,\cB,\mu)$, there is $h_2 \in L_0(X,\cB,\mu)$ such that $M_{p,q}(f+h_1)-M_{p,q}(h_2) \in L_q$. Then there is $h \in  L_0(X,\cB,\mu)$ such that $f-h \in L_p$.
\end{lemma}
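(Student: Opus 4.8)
The plan is to strip off the $\cB$-measurable structure by a disintegration and reduce to a fiberwise statement. Since $(X,\cA,\mu)$ is standard and $\sigma$-finite, and $L_2(X,\cB,\mu)$ embeds in the separable space $L_2(X,\cA,\mu)$, the $\sigma$-algebra $\cB$ is countably generated mod null; so after passing to an equivalent probability measure to set things up, I may realize $\cB$ (mod null) as $\pi^{-1}(\cB_Z)$ for a measurable factor map $\pi\colon X\to Z$ onto a standard space and disintegrate $\mu=\int_Z \mu_z\,\rd\bar\mu(z)$ into $\sigma$-finite conditional measures $\mu_z$ on the fibers $X_z=\pi^{-1}(z)$ (these may have infinite mass). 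Under this identification $L_0(X,\cB,\mu)$ is exactly the functions constant on each fiber; writing $f_z$ for the restriction of $f$ to $X_z$, the goal $\int_X|f-h|^p\,\rd\mu<\infty$ becomes $\int_Z\|f_z-h(z)\|_{L_p(\mu_z)}^p\,\rd\bar\mu(z)<\infty$ for a measurable field $z\mapsto h(z)$. The hypothesis, fed a global $h_1$ coming from a measurable field $z\mapsto a(z)$, produces a field $z\mapsto b(z)$ with $\int_Z\|M_{p,q}(f_z+a(z))-M_{p,q}(b(z))\|_{L_q(\mu_z)}^q\,\rd\bar\mu(z)<\infty$, and in particular $M_{p,q}(f_z+a(z))-M_{p,q}(b(z))\in L_q(\mu_z)$ for a.e.\ $z$. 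First I would run this with $h_1=0$ to produce a candidate $c:=h_2\in L_0(X,\cB,\mu)$ with $r_0(z):=M_{p,q}(f_z)-M_{p,q}(c(z))\in L_q(\mu_z)$ a.e.; the aim is to show $f-c\in L_p$, but the two fiber types behave differently, so I would split $Z$ along the $\cB$-measurable set $Z_{\mathrm{fin}}=\{z:\mu_z(X_z)<\infty\}$ and its complement $Z_\infty$.

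On $Z_{\mathrm{fin}}$ the constant $M_{p,q}(c(z))$ already lies in $L_q(\mu_z)$, hence $M_{p,q}(f_z)\in L_q(\mu_z)$, i.e.\ $f_z\in L_p(\mu_z)\subset L_1(\mu_z)$, so the conditional mean $\bar f:=E[f\mid\cB]$ is finite a.e.\ there. Now I would invoke Lemma \ref{lem:mazur2}: feeding the hypothesis the ($\cB$-measurable) field $h_1=-\bar f$ on $Z_{\mathrm{fin}}$ makes $f_z-\bar f(z)$ of mean zero, and the lemma gives $\|M_{p,q}(f_z-\bar f(z))-w\|_{L_q(\mu_z)}\ge C(p,q)\|f_z-\bar f(z)\|_{L_p(\mu_z)}^{p/q}$ for every constant $w$. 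Taking $w=M_{p,q}(h_2(z))$ for the $h_2$ returned by the hypothesis and integrating $q$-th powers, the finiteness supplied by the hypothesis forces $\int_{Z_{\mathrm{fin}}}\|f_z-\bar f(z)\|_{L_p(\mu_z)}^p\,\rd\bar\mu(z)<\infty$, so $h=\bar f$ works on $Z_{\mathrm{fin}}$.

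On $Z_\infty$ constants are not in $L_q$, so Lemma \ref{lem:mazur2} is unavailable; instead I would use that the inverse Mazur map $M_{q,p}$ is a homeomorphism of $\C$ fixing $0$. Since $r_0(z)\in L_q(\mu_z)$ its super-level sets have finite $\mu_z$-measure, and continuity of $M_{q,p}$ at $M_{p,q}(c(z))$ (quantitatively, inequality \eqref{eq:mazurhoelder}) shows $\{|f_z-c(z)|>\delta\}\subset\{|r_0(z)|>\delta'\}$ for a suitable $\delta'=\delta'(\delta,c(z))$, so all super-level sets of $f_z-c(z)$ have finite measure while $\mu_z(X_z)=\infty$. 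Feeding the hypothesis $h_1=-c$ on $Z_\infty$ yields $b'(z)$ with $M_{p,q}(f_z-c(z))-M_{p,q}(b'(z))\in L_q(\mu_z)$; but on the co-finite, hence infinite-measure, set $\{|f_z-c(z)|\le\delta\}$ one has $|M_{p,q}(f_z-c(z))|\le\delta^{p/q}$, so if $b'(z)\ne0$ the integrand stays bounded below by a positive constant on an infinite-measure set, which is impossible. Thus $b'(z)=0$ a.e.\ on $Z_\infty$, whence $M_{p,q}(f_z-c(z))\in L_q(\mu_z)$ and $\int_{Z_\infty}\|f_z-c(z)\|_{L_p(\mu_z)}^p\,\rd\bar\mu=\int_{Z_\infty}\|M_{p,q}(f_z-c(z))\|_{L_q(\mu_z)}^q\,\rd\bar\mu<\infty$. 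Setting $h$ equal to $\bar f$ on $Z_{\mathrm{fin}}$ and to $c$ on $Z_\infty$ gives a single element of $L_0(X,\cB,\mu)$ with $f-h\in L_p$; the two applications of the hypothesis can be merged into one by using the $\cB$-measurable field equal to $-c$ on $Z_\infty$ and $-\bar f$ on $Z_{\mathrm{fin}}$.

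The main obstacle is not the analytic core but the measure-theoretic bookkeeping together with the infinite-measure case. I need to set up the disintegration for a $\sigma$-finite $\mu$ over a possibly non-countably-generated $\cB$, and to verify that $Z_{\mathrm{fin}}$, the field $c$, and $\bar f$ are genuinely $\cB$-measurable so that the chosen values of $h_1$ are legitimate elements of $L_0(X,\cB,\mu)$. Most essentially, the infinite-measure fibers have \emph{no} analogue of Lemma \ref{lem:mazur2}: there one cannot recenter by a mean, and one must instead extract decay of $f_z-c(z)$ from the properness/continuity of the Mazur map and then use a second, carefully chosen $h_1$ to force the auxiliary constant to vanish. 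I expect the most delicate verifications to be the measurability of all fiberwise selections and the passage, in both cases, from the single global finiteness handed over by the hypothesis to the fiberwise integrals controlling $\int_Z\|f_z-h(z)\|_{L_p(\mu_z)}^p\,\rd\bar\mu$.
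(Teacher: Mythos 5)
Your proof is correct and is essentially the paper's own argument rendered in disintegrated form: your split of $Z$ into $Z_{\mathrm{fin}}$ and $Z_\infty$ is exactly the paper's decomposition of $(X,\cB,\mu)$ into its semifinite and purely infinite parts, your recentering by the fiberwise mean followed by Lemma \ref{lem:mazur2} is precisely the paper's conditional-expectation computation, and your two successive applications of the hypothesis (first $h_1=0$, then $h_1$ equal to minus the first correction) with the infinite-measure contradiction via \eqref{eq:mazurhoelder} is precisely the paper's purely infinite case. The only difference is presentational: the paper works globally with $\cB$-measurable sets and conditional expectations, thereby avoiding the disintegration and fiberwise-measurability bookkeeping that you invoke.
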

\begin{proof} Let us first consider the case when $(X,\cB,\mu)$ is purely infinite, that is every element of $\cB$ has measure $0$ or $\infty$. Applying the hypothesis to $h_1=0$, we obtain $h_0$ such that $M_{p,q}(f)-M_{p,q}(h_0) \in L_q$. Applying now the hypothesis to $h_1=-h_0$, we obtain $h_2$ such $M_{p,q}(f-h_0)-M_{p,q}(h_2) \in L_q$. If $h_2\neq 0$, there is $\varepsilon>0$ and $C>0$ such that $\{ |h_2|>\varepsilon\} \cap \{|h_0|\leq C\}$ is not null, that is has infinite measure as it belongs to $\cB$. This implies that $\{ |f-h_0|>\frac \varepsilon 2\} \cap \{|h_0|\leq C\}$ also has infinite measure, as it contains the preceding set minus $\{|M_{p,q}(f-h_0)-M_{p,q}(h_2)|>(2^{\frac p q}-1)\varepsilon^{\frac p q}\}$, which has finite measure because $M_{p,q}(f-h_0)-M_{p,q}(h_2)\in L_q$. But by \eqref{eq:mazurhoelder}, $\{ |f-h_0|>\frac\varepsilon 2\} \cap \{|h_0|\leq C\}$ is contained in $\{|M_{p,q}(f)-M_{p,q}(h_0)|>\delta\}$ whenever $c(\delta^{\frac q p}+C^{1-\frac p q}\delta)<\frac\varepsilon 2\}$. We obtain a contradiction because $M_{p,q}(f)-M_{p,q}(h_0)\in L_q$. So our hypothesis that $h_2\neq 0$ is absurd, so we obtain $M_{p,q}(f-h_0)\in L_q$, or equivalently $f-h_0\in L_p$.

 Let us now consider the case when $(X,\cB,\mu)$ is semifinite. Replacing $\mu$ by $\rho \mu$ for a positive $\rho \in L_1(\cB)$, we can assume that $\mu$ is a probability measure. So we can talk about the $\mu$-preserving conditional expectation $\mathbb{E}_{\cB}$. As in the previous case, there is $h_0\in L_0(\cB)$ such that $M_{p,q}(f)-M_{p,q}(h_0) \in L_q$. In particular $\mathbb{E}_{\cB}(|f|^q|)<\infty$ almost everywhere, and we can define $h=\mathbb{E}_{\cB}(f)$. By the assumption for $h_1=-h$, there is $h_2 \in L_0(B)$ such that $M_{p,q}(f-h)-h_2 \in L_q$. By Lemma \ref{lem:mazur2}, we obtain
 \begin{align*} \|M_{p,q}(f-h)-h_2\|_q^q &= \mathbb{E}(\mathbb{E}_{\cB}(|M_{p,q}(f-h)-h_2|^q))\\ & \geq C(p,q) \mathbb{E}[\mathbb{E}_{\cB}(|f-h|^p))\\& = C(p,q)\|f-h\|_p^p.\end{align*} This proves that $f-h \in L_p$. Finally, the general case follows by decomposing $(X,\cB,\mu)$ as a purely infinite part and a semifinite part.
\end{proof}

\subsection{Formal coboundaries}
Let $(X,\mu)$ be a $\sigma$-finite measure space. Let $G \to \Aut(X,[\mu])\ltimes L_0(X,\mu,\T)$ be a continous group homomorphism. For every $0<p<\infty$, denote by $\pi^{p,\mu} \colon G \to \cO(L_p(X,\mu))$ the corresponding continuous representation. We also denote by $\pi^{p,\mu}$ the continous extension of this linear representation to $L_0(X,\mu)$ (by using the same formula for $\pi^{p,\mu}$ as in the preliminaries). We consider the two cohomology spaces $H^1(G,\pi^{p,\mu},L_p(X,\mu))$ and $H^1(G,\pi^{p,\mu}, L_0(X,\mu))$. Clearly, since $L_p(X,\mu) \subset L_0(X,\mu)$, we have a natural homomorphism 
$$\eta : H^1(G,\pi^{p,\mu},L_p(X,\mu)) \rightarrow H^1(G,\pi^{p,\mu}, L_0(X,\mu)).$$
We are interested in its kernel $H^1_\sharp(G,\pi^{p,\mu},L_p(X,\mu)) := \ker \eta$ which corresponds to cocycles in $L_p(X,\mu)$ that are formal coboundaries. Note that both cohomology groups $H^1(G,\pi^{p,\mu},L_p(X,\mu))$ and $H^1_\sharp(G,\pi^{p,\mu},L_p(X,\mu))$ do not depend on the choice of $\mu$ in the class $[\mu]$ because the map $f \mapsto \left( \frac{\mu}{\nu} \right)^{1/p} f$ intertwines $\pi^{p,\mu}$ and $\pi^{p,\nu}$ for $\nu \in [\mu]$. 

We were unable to determine if the set $\{ p > 0 \mid H^1(G,\pi^{p,\mu},L_p(X,\mu))=0 \}$ is an interval in general. However, we have the following result.

\begin{thm}\label{thm:monotonicity_formal_coboundaries} Let $1 \leq p<q$ and $G \to \Aut(X,[\mu])\ltimes L_0(X,\mu,\T)$ as above. If $H^1_\sharp(G,\pi^{q,\mu},L_q(X,\mu)) =0$ then $H^1_\sharp(G,\pi^{p,\mu},L_p(X,\mu)) =0$.
\end{thm}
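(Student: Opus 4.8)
The plan is to use the Mazur map $M_{p,q}$ as an equivariant change of exponent. A direct computation with the explicit formula for $\pi^{p,\mu}$ (the positive Radon--Nikodym weight raised to the power $1/p$, composed with the multiplication by a $\T$-valued function) gives the equivariance
\[ M_{p,q}\bigl(\pi^{p,\mu}_g(f)\bigr) = \pi^{q,\mu}_g\bigl(M_{p,q}(f)\bigr) \qquad (f \in L_0(X,\mu),\ g\in G), \]
together with the analogous identity for the inverse Mazur map $M_{q,p}$; in particular $M_{p,q}$ and $M_{q,p}$ restrict to mutually inverse bijections between the $\pi^{q,\mu}$-invariant and the $\pi^{p,\mu}$-invariant functions. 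Now let $c \in Z^1(G,\pi^{p,\mu},L_p)$ represent a class in $H^1_\sharp(G,\pi^{p,\mu},L_p)$, so that $c(g) = \pi^{p,\mu}_g(f)-f$ for some $f \in L_0(X,\mu)$ with $c(g)\in L_p$ for every $g$. I must produce $h \in L_p$ with $c(g)=\pi^{p,\mu}_g(h)-h$; equivalently, I must write $f$ as the sum of a $\pi^{p,\mu}$-invariant function and an element of $L_p$.

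The heart of the argument is the verification of the hypothesis of Lemma \ref{thm:caspmp}, which I first carry out in the model case where $\pi^{p,\mu}$ comes from a measure preserving action with trivial $\T$-cocycle: then $\pi^{p,\mu}_g$ and $\pi^{q,\mu}_g$ are both the Koopman operator $f\mapsto\sigma_g(f)$, and the invariant functions are exactly $L_0(X,\cB,\mu)$ for the $\sigma$-algebra $\cB$ of invariant sets. Given any invariant $h_1 \in L_0(X,\cB,\mu)$, the function $f+h_1$ is again a formal $L_p$-coboundary, since $\pi^{p,\mu}_g(f+h_1)-(f+h_1)=c(g)\in L_p$. By the equivariance and Lemma \ref{lem:mazur}, $M_{p,q}(f+h_1)$ is then a formal $L_q$-coboundary for $\pi^{q,\mu}$, and the hypothesis $H^1_\sharp(G,\pi^{q,\mu},L_q)=0$ upgrades it to a genuine $L_q$-coboundary, yielding $h' \in L_q$ with $M_{p,q}(f+h_1)-h'$ invariant. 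Applying $M_{q,p}$ to this invariant function produces $h_2:=M_{q,p}\bigl(M_{p,q}(f+h_1)-h'\bigr)\in L_0(X,\cB,\mu)$ satisfying $M_{p,q}(f+h_1)-M_{p,q}(h_2)=h'\in L_q$, which is precisely the hypothesis required by Lemma \ref{thm:caspmp} (the inner estimate of which relies on Lemma \ref{lem:mazur2}). That lemma then gives $h\in L_0(X,\cB,\mu)$ with $f-h\in L_p$, and since $h$ is invariant, $c(g)=\pi^{p,\mu}_g(f-h)-(f-h)$ exhibits $c$ as an $L_p$-coboundary.

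The main obstacle is to reduce the general nonsingular, $\T$-twisted situation to this model, and here I would split $X$ into two invariant pieces according to the invariant functions of $\pi^{p,\mu}$ (a linear subspace, so closed under countable combinations). On the part $X_1$ carrying no nonzero invariant function, the equivariance shows that $M_{p,q}(f|_{X_1})$ is a formal $L_q$-coboundary; applying $H^1_\sharp(q)=0$ forces its invariant part to vanish, whence $M_{p,q}(f|_{X_1})\in L_q$, that is $f|_{X_1}\in L_p$, so $c|_{X_1}$ is already an $L_p$-coboundary. On the complementary part $X_0$ there is a $\pi^{p,\mu}$-invariant $\xi$ of full support, obtained by summing a countable invariant family covering $X_0$; one checks that $\sign(\xi)$ solves the phase equation $\omega(g)\sigma_g(u)=u$ (independently of $p$) and that $|\xi|^p\mu$ is an equivalent invariant measure (the same measure serving for the exponent $q$). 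Gauging by $\sign(\xi)$ and passing to $|\xi|^p\mu$ therefore reduces $X_0$ to the measure preserving model above for both exponents at once, legitimately since the Mazur intertwiners and the canonical isomorphisms $L_p(X,\mu)\cong L_p(X,|\xi|^p\mu)$ leave $H^1_\sharp$ unchanged; the two regimes built into Lemma \ref{thm:caspmp}, purely infinite and semifinite, are exactly what is needed to accommodate the arbitrary invariant $\sigma$-algebra that arises. Gluing the conclusions on $X_0$ and $X_1$ yields the decomposition $f=(\text{invariant})+L_p$ on all of $X$, and hence $H^1_\sharp(G,\pi^{p,\mu},L_p)=0$. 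I expect the delicate points to be the measurable construction of $\xi$ and the bookkeeping ensuring that a single gauge and a single invariant measure reduce both exponents simultaneously; the continuity of all cocycles produced is automatic from the H\"older estimate \eqref{eq:mazurLipschitz}.
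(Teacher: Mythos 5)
Your proof is correct and follows essentially the same route as the paper's: Mazur-map equivariance, Lemma \ref{lem:mazur}, and a verification of the hypothesis of Lemma \ref{thm:caspmp} after reducing to the measure-preserving, untwisted case by gauging with the sign of a non-vanishing invariant function $\xi$ and passing to the invariant measure $|\xi|^p\mu$. The only organizational difference is that the paper skips your exhaustion construction of a maximal invariant support: it applies the $q$-hypothesis once to $M_{p,q}(f)$, takes the zero set of the resulting invariant function $h_1$ as the piece where $f$ is already in $L_p$, and uses $h_1$ itself as the gauge on the complement.
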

\begin{proof} 
For $f \in L_0(X,\mu)$, denote $\partial_{p} f(g) = \pi^{p,\mu}(g) f -f$. By assumption, for all $f \in L_0(X,\mu)$, we have
\begin{equation}\label{eq:formalcoboun_are_cobound q} \forall g \in G, \; \partial_{q} f(g) \in L_q(X,\mu)  \implies \exists h \in L_q(X,\mu), \; \partial_q f=\partial_q h.\end{equation}
We want to prove that for all $f \in L_0(X,\mu)$, we also have
\begin{equation}\label{eq:formalcoboun_are_cobound p} \forall g \in G, \; \partial_{p} f(g) \in L_p(X,\mu)  \implies \exists h \in L_p(X,\mu), \; \partial_p f=\partial_p h.\end{equation}
In order to prove this, we use the properties of the Mazur map. Indeed, it is obvious from the definition of $\pi^{q,\mu}$ that $M_{p,q}$ establishes a continuous (nonlinear) bijection between the spaces $L_0(X,\mu)^{\pi^{p,\mu}(G)}$ and $L_0(X,\mu)^{\pi^{q,\mu}(G)}$, with inverse $M_{q,p}$. Moreover, the map $M_{p,q}$ sends $L_p(X,\mu)$ onto $L_q(X,\mu)$ and it is $G$-equivariant, in the sense that it intertwines $\pi^{p,\mu}$ and $\pi^{q,\mu}$.

Let $f \in L_0(X,\mu)$ such that $\partial_p f(g) \in L_p$ for every $g \in G$. Denote $f_1=M_{p,q}(f)$. By Lemma \ref{lem:mazur}, $\partial_q f_1(g)= M_{p,q}(\pi^{p,\mu}(g) f) - M_{p,q}(f)$ belongs to $L_q$. By our assumption, there exists $h_1 \in L_0(X,\mu)^{\pi^{q,\mu}(G)}$ such that $f_1-h_1 \in L_q$. The set $X_0=\{x \in X, |h_1(x)|=0\}$ is $G$-invariant, and the restriction of $f_1$ to $X_0$ belongs to $L_q$. Equivalently the restriction of $f$ to $X_0$ belongs to $L_p$. So replacing $X$ by $X \setminus X_0$, we can assume that $h_1$ is almost everywhere non-zero. Since $h_1$ is invariant under the representation $\pi^{q,\mu}$, this easily implies that $|h_1|^q \mu$ is an invariant measure and that the cocycle in $L_0(X,\mu,\T)$ is the coboundary of $\mathrm{sgn}(h_1)=h_1/|h_1| \in L_0(X,\mu,\T)$. So without loss of generality we can assume that the homomorphism $G \rightarrow \Aut(X,[\mu])\ltimes L_0(X,\mu,\T)$ takes its values in $\Aut(X,\mu)$. Denote by $\mathcal B$ the $\sigma$-algebra of $G$-invariant sets. Then $L_0(X,\mu)^{\pi^{q,\mu}(G)}$ coincides with $L_0(X,\mathcal B,\mu)$. The theorem is now just a reformulation of Lemma \ref{thm:caspmp}.
\end{proof}

\begin{question}
Is Theorem \ref{thm:monotonicity_formal_coboundaries} still true if we replace $H_\sharp^1(G,\pi^{p,\mu},L_p(X,\mu))$ by $H^1(G,\pi^{p,\mu},L_p(X,\mu))$?
\end{question}

We observe that it is not true in general that 
$$H_\sharp^1(G,\pi^{p,\mu},L_p(X,\mu))=H^1(G,\pi^{p,\mu},L_p(X,\mu)).$$ For example, let $\rho : G \rightarrow \mathcal{O}(H)$ be an orthogonal representation and $c \in Z^1(G,\rho,H)$. Let $\sigma$ be the Gaussian action associated to $\rho$ and $\widehat{c}$ the cocycle for $\sigma$ associated to $c$ as in the proof of Proposition \ref{gaussian}. Then one can show that the cocycle $\widehat{c}$ is \emph{never} a formal coboundary, unless $c$ itself is a coboundary. 

The following proposition also emphasizes the difference between $H_\sharp^1$ and $H^1$.

\begin{prop}
Let $\sigma : G \curvearrowright (X,\mu)$ be a probability measure preserving action. Fix $p \geq 1$ and let $c \in Z^1(G,\pi,L_p(X,\mu))$ where $\pi$ is the Koopman representation of $\sigma$. Suppose that $c(g)=\sigma_g(f) -f$ for all $g \in G$ and some $f \in L_0(X,\mu)$. Then $c \in \overline{B^1(G,\pi,L_p(X,\mu))}$.

In particular, if $\pi$ has spectral gap, then $H_\sharp^1(G,\pi^{p,\mu},L_p(X,\mu))=0$.
\end{prop}
\begin{proof}
Define a sequence of functions $f_n=\max(-n,\min(f,n))$. Let $c_n(g)=\sigma_g(f_n)-f_n$. Observe that $c_n \in B^1(G,\pi,L_p(X,\mu))$ for all $n$ because $f_n$ is bounded. Since $f_n$ converges in measure to $f$ when $n \to \infty$, we have that $c_n(g)$ converges in measure to $c(g)$ when $n \to \infty$. Observe also that $|c_n(g)| \leq |c(g)|$ for all $g \in G$. Thus, by the dominated convergence theorem, we know that $c_n(g) \to c(g)$ in $L_p(X,\mu)$ for all $g \in G$. This proves that the sequence of coboundaries $c_n$ converges to $c$ in $Z^1(G,\pi, L_p(X,\mu))$, as we wanted.
\end{proof}

\section{Harmonic cocycles and state preserving actions}\label{section:harmonic}

Let $E$ be a uniformly convex Banach space and let $\pi$ be a continuous representation of a locally compact group $G$ on $E$. Then we can decompose $E$ as a $\pi$-invariant direct sum $E=E_\pi \oplus E^\pi$ where $E^\pi$ is the subspace of $\pi$-invariant vectors and $E_\pi$ is its natural complement (defined as the orthogonal of $(E^*)^{\pi^*}$ where $\pi^*$ is the dual representation of $\pi$ on $E^*$), see \cite{MR2316269}. We say that $\pi$ has \emph{spectral gap} if $\pi|_{E_\pi}$ has no almost invariant vectors. By \cite[Theorem 1.1]{DruNo}, this is equivalent to the existence of a symmetric compactly supported probability measure $\mu$ on $G$ such that $\|\pi(\mu)|_{E_\pi}\| < 1$.

\begin{lemma} \label{harmonic representent}
Let $G$ be a locally compact group and let $\pi$ be a representation of $G$ on a uniformly convex Banach space $E$ that has spectral gap. Let $\mu$  be a symmetric compactly supported probability measure $\mu$ on $G$ such that $\| \pi(\mu)|_{E_\pi} \| < 1$. Then every cohomology class in $H^1(G,\pi,E)$ admits a unique $\mu$-harmonic representent.
\end{lemma}
\begin{proof}
By decomposing $E=E_\pi \oplus E^\pi$, we can reduce the problem to the case where $\pi$ is either trivial  or has no invariant vectors.

Assume first that $\pi$ is trivial. Then $B^1(G,\pi,E)=0$ and an element of $Z^1(G,\pi,E)$ is just a group homomorphism from $G$ to $E$. Thus, since $\mu$ is symmetric, every element of $Z^1(G,\pi,E)$ is $\mu$-harmonic. 

Now, assume that $\pi$ has no invariant vectors, i.e.\ $E=E_\pi$. Take $c \in Z^1(G,\pi,E)$. Let $\alpha$ be the affine isometric action of $G$ on $E $ associated to $c$. Then the affine map $\alpha(\mu)=\int_{g \in G} \alpha_g \rd \mu(g)$ is $k$-Lipschitz with $k=\|\pi(\mu)\| <1$. Therefore, $\alpha(\mu)$ has a fixed point $\xi  \in E$ for some $\xi \in E$. Let $c'(g)=c(g)+\pi(g)\xi-\xi$. Then we get $ \int_G c'(g) \rd \mu(g) =\alpha(\mu) \xi-\xi=0$. We conclude that $c'$ is a $\mu$-harmonic representent of the cohomology class of $c$. For the uniqueness part, observe that if we have a coboundary $b(g)=\pi(g)\xi-\xi$ that is $\mu$-harmonic, then $\pi(\mu)\xi=\xi$, hence $\xi=0$ because $\| \pi(\mu) \| < 1$.
\end{proof}

\begin{lemma} \label{injective cohomology}
Let $G$ be a compactly generated locally compact group and let $\pi_1,\pi_2$ be two representations of $G$ on two strictly convex Banach spaces $E_1$ and $E_2$. Suppose moreover that $E_1$ is uniformly convex and that $\pi_1$ has spectral gap. Then every injective continuous $G$-equivariant linear map $\psi : E_1 \rightarrow E_2$ induces an injective map $\psi_* : H^1(G,\pi_1,E_1) \rightarrow H^1(G,\pi_2,E_2)$.
\end{lemma}
\begin{proof}
Since $\pi_1$ has spectral gap and $E_1$ is uniformly convex, we can find a symmetric compactly supported probability measure $\mu$ on $G$ such that $\| \pi_1(\mu)_{E_{\pi_1}} \| < 1$. Since $G$ is compactly generated, we can assume that the support of $\mu$ generates $G$.

Take $\omega \in H^1(G,\pi_1,E_1)$. By Lemma \ref{harmonic representent} $\omega$ admits a $\mu$-harmonic representent $c \in Z^1(G,\pi_1,E_1)$. Then $\psi_*\omega \in H^1(G,\pi_1,E_1)$ is represented by the cocycle $c' : g \mapsto \psi(c(g))$. Note that $c'$ is still $\mu$-harmonic. Suppose that $\psi_*\omega=0$. This means that $c'$ is a coboundary, i.e.\ $c'(g)=\pi_2(g) \xi-\xi$ for some $\xi \in E_2$ and all $g \in G$. Since $c'$ is $\mu$-harmonic, we have $\int_G \pi_2(g)\xi \rd \mu(g) = \xi$. Since $E_2$ is stricly convex, this implies that $\pi_2(g)\xi=\xi$ for all $g$ in the support of $\mu$ hence for all $g \in G$. We conclude that $c'=0$, hence $c=0$ because $\psi$ is injective.
\end{proof}
\begin{rem}\label{rem:pmp_actions}
  The previous lemma applies for example when $\sigma : G \curvearrowright (X,\mu)$ is a continuous \emph{probability measure preserving} action of a compactly generated locally compact group, $(\pi_i,E_i) =(\sigma^{p_i,\mu}, L^{p_i}(X,\mu))$ for $\infty>p_1 \geq p_2>1$ and $\psi$ is the inclusion map $L^{p_1} \to L^{p_2}$. The lemma shows that
  \begin{equation*}\label{eq:pmp_actions}\forall 1<p_1<p_2<\infty, H^1(G,\sigma^{p_2,\mu})=0 \implies H^1(G,\sigma^{p_1,\mu})=0.\end{equation*}
  Indeed, the assumption that $H^1(G,\sigma^{p_2,\mu}) =0$ implies that $\sigma^{p_2,\mu}$ has spectral gap, which implies that $\sigma^{p_1,\mu}$ has spectral gap, see \cite{MR2316269}. In particular, if $G$ is a locally compact property (T) group, $H^1(G,\sigma^{p,\mu})=0$ for every $p\in [1,\infty)$. This was already known when $G$ is discrete \cite{lavyOlivier}, or $G$ admits a finite Kazhdan set \cite{CzuronKalentar}.
\end{rem}

  More generally, the lemma applies to the actions on non-commutative $L_p$ spaces associated to state-preserving actions on von Neumann algebras, and also for actions on Schatten $p$-classes. We start with the latter as it is more elementary.

  For a Hilbert space $H$, we denote by $S_p(H)$ the Schatten $p$-class, that is the space of operators on $H$ such that $\|T\|_p:= (\mathrm{Tr}(|T|^p))^{\frac 1 p}<\infty$. We say that a group has $FS_p$ if for every $H$ and every continous isometric representation $\pi : G \curvearrowright S_p(H))$, $H^1(G,\pi,S_p(H))=0$.
\begin{thm} If $G$ is a $\sigma$-compact locally compact group, then the set of $1<p<\infty$ such that $G$ has $FS_p$ is empty if $G$ does not have property (T), and is an interval containting $(1,2]$ otherwise.
\end{thm}
\begin{proof} Assume that $G$ does not have property (T). By Guichardet's theorem, there is a unitary representation $\pi$ on a Hilbert space $H$ and an unbounded cocycle $b \in Z^1(G,\pi)$. If $\xi \in H^*$ is a unit vector, then the formula $g \cdot T = \pi(g) T + b(g) \otimes \xi$ defines an unbounded action by isometries on $S_p(H)$ for every $1 \leq p \leq \infty$. So $G$ does not have $FS_p$.

  If $G$ has property (T), then $G$ is compactly generated, and has $FS_2$ by Delorme's theorem ($S_2(H)$ is a Hilbert space). We have to prove that, if $1<p<q<\infty$ are such that $G$ has $FS_q$, then $G$ has $FS_p$. We can assume that $p \neq 2$. Let $\pi \colon G \to O(S_p(H))$ be an orthogonal representation. By \cite{MR611284}, $\pi_g$ is of the form $\pi_g(T) = W_g J_g(T)$ for a unitary $W_g$ and a Jordan automorphism $J_g$ of $B(H)$. In particular, the same fomula gives rise to an isometric representation on $S_q(H)$ and the inclusion $S_p(H) \subset S_q(G)$ is equivariant. Moreover both representations have spectral gap, \cite{MR611284}, and the Schatten spaces are uniformly convex when $1<p<\infty$. We conclude by Lemma \ref{injective cohomology} that $H^1(G,S_p(H)) \to H^1(G,S_q(H))=0$ is injective and $H^1(G,S_p(H))=0$.
    \end{proof}

To state our result about state-preserving actions, we need to recall Haagerup's general definition of non-commutative $L_p$ spaces \cite{MR560633}. We follow the approach in \cite[\S 2.1]{MR3987868}. If $M$ is a von Neumann algebra, the \emph{core of $M$} is the unique (up to unique isomorphism) tuple $(c(M),\tau,\theta,\iota)$ of a von Neumann algebra $c(M)$, a normal faithful semifinite trace $\tau$ on $c(M)$, a continuous homomorphism $\theta:\R^*_+ \to \mathrm{Aut}(M)$, and a normal injective $*$-homomorphism $\iota: M \to C(M)$ satisfying $\tau \circ \theta_s = e^{-s}\tau$ and $\{x \in c(M) \mid \forall s \in \R, \theta_s (x)=x\} = \iota(M)$. In the sequel, we will identify $M$ with its image in $c(M)$ and write $x$ instead of $\iota(x)$. For example, if $\phi$ is a normal faithful weight on $M$, the core can be realized as $c(M) = M \rtimes_{\sigma^\phi} \R$, the crossed product by the modular automorphism group of $\phi$. In that case, $\theta$ is given by the dual action and $\iota$ the natural inclusion. For details on the construction and the existence of a trace $\tau$ as required, see \cite{MR1943006}. 
Then $L_p(M)$ is defined as the space of $\tau$-measurable operators affiliated to $c(M)$  such that $\theta_s(h)= e^{-s/p} h$ for all $s \in \R$. By \cite[Theorem 1.2 and 1.3]{MR560633}, for every normal state $\varphi$ on $M$, there is a unique element of $L_1(M)$, that we also denote $\varphi$, satisfying $$\varphi \left(\int_{\R^*_+} \theta_t(x) \frac{dt}{t} \right) = \tau(\varphi x)$$ for every nonnegative $x \in c(M)$. Moreover, this map extends by linearity to an isomorphism $M_* \to L_1(M)$. This allows to define, for $1\leq p<\infty$ and $x \in L_p(M)$, $\|h\|_p := \| |h|^p \|_{M_*}^{\frac 1 p}$. This turns $L_p(M)$ into a Banach space that is uniformly convex if $1<p<\infty$, and the $\|\cdot\|_p$ norms satisfy Hölder's inequality. 

By the uniqueness of the core, any continuous action by automorphisms of $G$ on $M$ gives rise to a continuous action by isometries on $L_p(M)$, that we call the $p$-Koopman representation.

\begin{thm}
Let $G$ be a locally compact group with property (T). Let $\sigma : G \rightarrow \Aut(M)$ be an action on a von Neumann algebra $M$ that preserves some faithful normal state $\varphi$ on $M$.  Take $p \geq 2$ and let $\pi^p : G \rightarrow O(L_p(M))$ be the $p$-Koopman representation associated to $\sigma$. Then $H^1(G,\pi^p,L_p(M))=0$.
\end{thm}
\begin{proof}
  First, $\pi_p$ has spectral gap by \cite{MR2957217}.

Define a continuous linear map $\psi : L_p(M) \rightarrow L_2(M)$ by the formula 
$$\psi(x\varphi^{1/p}) = x\varphi^{1/2}$$
for all $x \in M$. This map is well-defined because $M\varphi^{1/p}$ is dense in $L_p(M)$ and for all $x \in M$, we have $$\| x \varphi^{1/2}\|_2=\| x \varphi^{1/p} \cdot \varphi^{1/q}\|_2 \leq \| x \varphi^{1/p}\|_p \cdot \| \varphi^{1/q}\|_q= \| x \varphi^{1/p}\|_p  $$
where $\frac{1}{p}+\frac{1}{q}=\frac{1}{2}$. Observe that $\pi^p(g)(x\varphi^{1/p})=\sigma(g)(x)\varphi^{1/p}$ for all $x \in M$, because $\sigma$ preserves $\varphi$, and we have the same formula for $p=2$. This implies that $\psi$ is $G$-equivariant with respect to the Koopman representations of $\sigma$. Thus it induces an injective map from $H^1(G,\pi^p,L_p(M))$ into $H^1(G,\pi^2,L_2(M))$ by Lemma \ref{injective cohomology}. Since $G$ has property (T), we know that $H^1(G,\pi^2,L_2(M))=0$. We conclude that $H^1(G,\pi^p,L_p(M))=0$.
\end{proof}

\section{Stability properties of the constants $p_G$ and $p'_G$} \label{section:stability}

We start with some elementary stability properties.
\begin{prop}\label{prop:subgroups_and_quotients}
Let $G$ be a locally compact group and let $H<G$ be a closed subgroup. Then $p'_H \leq p'_G$. If $H \triangleleft G$ is a closed normal subgroup, then $p_{G/H} \geq p_G$.
\end{prop}
\begin{proof} Any proper action of $G$ on an $L_p$ space restricts to a proper action of $H$, so $p'_H \leq p'_G$. If $H$ is normal, any action of $G/H$ with unbounded orbits on an $L_p$ space can be seen as an action of $G$, so $p_{G/H} \geq p_G$.
\end{proof}

\begin{prop}\label{prop:stability_product} Let $G_1,G_2$ be two locally compact groups. Then
  \[ p_{G_1 \times G_2} =\min(p_{G_1},p_{G_2}) \quad  \text{and} \quad p'_{G_1 \times G_2} =\max(p'_{G_1},p'_{G_2}). \]
\end{prop} 
\begin{proof} The inequalities $p_{G_1 \times G_2} \leq \min(p_{G_1},p_{G_2})$ and $p'_{G_1 \times G_2} \geq \max(p'_{G_1},p'_{G_2})$ follow from Proposition \ref{prop:subgroups_and_quotients}.

  For the inequality $p_{G_1 \times G_2} \geq \min(p_{G_1},p_{G_2})$, observe that if $G_1 \times G_2 \curvearrowright L_p$ has unbounded orbits, then its restriction to either $G_1$ or $G_2$ also has unbounded orbits.

  For the inequality $p'_{G_1 \times G_2} \leq \max(p'_{G_1},p'_{G_2})$, observe that given two isometric actions $G_1 \curvearrowright L_p(\Omega_1)$ and $G_1 \curvearrowright L_p(\Omega_2)$, one can construct the action of $G_1 \times G_2$ on $L_p(\Omega_1 \cup \Omega_2)$, which is proper whenever both actions were proper.
\end{proof}
\begin{prop}\label{prop:stability_quotient_by_compact}
 Let $G$ be a locally compact group and let $K \triangleleft G$ be a normal compact subgroup. Then
\[ p_{G/K} = p_G \quad  \text{and} \quad p'_{G/K} = p'_G.\]
\end{prop}
\begin{proof} The only thing to notice is that the space of $K$-invariant vectors for an isometric representation of $G$ on an $L_p$-space is isometric to an $L_p$-space. This either follows from the general form of isometries of $L_p$-space, or from the classical result \cite{MR248514} that the range of a norm $1$ projection in $B(L_p)$ is isometric to an $L_p$-space.
  \end{proof}

We now investigate the stability of the constants $p_G$ and $p'_G$ under measure equivalence. For this we need to recall some definitions.

Let $G$ be a locally compact group $G$ with left Haar measure $m_G$. A measure preserving action of $G$ on $(X,\mu)$ is called \emph{principal} if there is a measure preserving conjugacy between $G \curvearrowright (X,\mu)$ and an action of the form $G \curvearrowright (G \times \Omega, m_G \otimes \nu)$ where $G$ acts by translation of the left coordinate and $(\Omega, \nu)$ is a measure space with finite measure. In that case, we can identify $\Omega$ with $X/G$ and thus equip $X/G$ with the finite measure space structure coming from this identification. The finite measure on $X/G$ coming from this identification will be denoted $\mu/G$.

Let $G_1$ and $G_2$ be two locally compact groups. The groups $G_1,G_2$ are said to be measure equivalent if there is a \emph{measure equivalence coupling}, that is a measure space $(X,\mu)$ equipped with two commuting measure preserving principal actions of $G_1$ and $G_2$. Observe that we then obtain a measure preserving action of $G_2$ on $(X/G_1,\mu/{G_1})$ and of $G_1$ on $(X/G_2,\mu/{G_2})$. We will use the notations $\mu_1=\mu/G_1$ and $\mu_2=\mu/G_2$.

Let $p_1 : X  \rightarrow G_1$ be a $G_1$-equivariant map (it always exists because the action of $G_1$ is principal). Then for every $g_2 \in G$, the map 
$$X \ni \omega \mapsto p_1(g_2 \omega)^{-1}p_1(\omega) \in G_1$$ is $G_1$-invariant. Thus, one can define a cocycle $c_1 : G_2 \times X/G_1 \rightarrow G_1$ by the formula
$$ c_1(g_2,\omega_1)=p_1(g_2 \omega)^{-1}p_1(\omega)$$ where $\omega \in X$ is any representant of $\omega_1 \in X/G_1$. Observe that the cocycle $c_1$ depends on the section $p_1$.

Now, suppose that $G_1$ is compactly generated. Take $p > 0$. We say that the measure equivalence coupling $(X,\mu)$ is \emph{$L_p$-integrable} over $G_1$ if there exists a $G_1$-equivariant map $p_1 : X  \rightarrow G_1$ such that the associated cocycle $c_1$ defined above satisfies the  following $L_p$-integrability condition
$$ \int_{X/G_1} |c_1(g_2,\omega_1)|_{G_1}^p \rd \mu_1(\omega_1)<\infty\ \  \forall g_2 \in G_2$$
where $| \cdot |_{G_1}$ is the word length with respect to any compact generating set of $G_1$ (the integrability condition does not depend on the choice).

We say that two compactly generated groups $G_1$ and $G_2$ are $L_p$-measure equivalent if there exists a measure equivalence coupling that is $L_p$-integrable over both $G_1$ and $G_2$. Thanks to the construction in \cite[Theorem 3.3]{MR1740985}, we know that $L_p$-measure equivalence is more general than \emph{$L_p$-orbit equivalence}.

More details on measure equivalence of locally compact groups and $L_p$-measure equivalence can be found in \cite[Section 1.2]{MR3117525} and the references therein.

%

\begin{thm}\label{invariance_under_LpME} Let $G_1$ and $G_2$ be two locally compact groups. Let $1 \leq  p < \infty$. Suppose that $G_1$ is compactly generated and that there exists a measure equivalence coupling between $G_1$ and $G_2$ that is $L_p$-integrable over $G_1$.
  \begin{enumerate}[\rm (i)]
  \item If $G_1$ admits an affine isometric action without fixed points on some $L_p$-space then so does $G_2$.
  \item If $G_1$ admits a proper affine isometric action on some $L_p$-space then so does $G_2$.
    \end{enumerate}
\end{thm}

\begin{proof}We use the standard tool of induction of Banach-space actions as in \cite[Section 8.b]{MR2316269}. Let $(X,\mu)$ be measure equivalence coupling that is $L_p$-integrable over $G_1$. Let $p_1$ and $c_1$ be as in the definition. 

Let $\alpha \colon G_1 \curvearrowright E$ be an action by affine isometries of $G_1$ on a Banach space $E$. Let $L_0(X,\mu,E)^{G_1}$ be the set of all $G_1$-equivariant Bochner-measurable maps from $X$ to $E$. Note that $L_0(X,\mu,E)^{G_1}$ is only an affine subspace of $L_0(X,\mu,E)$ (it does not contain $0$). Observe that we have a natural affine action $\beta : G_2 \curvearrowright L_0(X,\mu, E)^{G_1}$ given by $\beta_{g_2}(f)(x)=f(g_2^{-1}x)$ for all $x \in X$. If $f,h \in L_0(X,\mu,E)^{G_1}$, then the map $\| f-h\|_E : X \rightarrow \R_+$ is $G_1$-invariant and thus we can define
$$ \| f-h\|_{p,G_1} = \left( \int_{X/G_1} \|f-h\|_E^p \rd\mu_1 \right)^{1/p}.$$
We clearly have $\| \beta_{g_2}(f)-\beta_{g_2}(h) \| = \| f-h\|$ for all $g_2 \in G_2$ (because the action of $G_2$ on $X/G_1$ preserves the measure).

Take $x \in E$ and define $f_0 \in L_0(X,\mu,E)^{G_1}$ by the formula $f_0(\omega)=p_1(\omega) \cdot x$. Now, define an affine subspace $$F=\{ f \in L_0(X,\mu,E)^{G_1} \mid \| f-f_0\|_{p,G_1} < \infty\}.$$
Note that the space $F$ is isometric to $L_p( X/G_1,\mu_1, E)$. In particular, if $E$ is an $L_p$-space than $F$ is also an $L_p$-space.

Now, a key observation is that $\beta_{g_2}(f_0) \in F$ for all $g_2 \in G_2$. Indeed, we have
$$ \| \beta_{g_2}(f_0) - f_0 \|_{p,G_1}=\left( \int_{X/G_1} \| c_1(g_2,\omega_1) \cdot x -x\|_E^p \rd \mu_1(\omega_1) \right)^{1/p}$$
and this integral is finite because of the $L_p$-integrability of $c_1$ and the fact that the function $g_1 \mapsto \| g_1 \cdot x-x\|_E$ grows sublinearly: there exists a constant $C > 0$ such that $\| g_1 \cdot x- x\|_E \leq C|g_1|_{G_1}$ for all $g_1 \in G_1$.

Since $\beta_{g_2}(f_0) \in F$ for all $g_2 \in G_2$, it follows that $F$ is globally invariant under the action $\beta$. We conclude that the action $\beta : G_2 \curvearrowright L_0(X,\mu,E)^{G_1}$ restricts to an affine isometric action, still denoted $\beta$, of $G_2$ on $F$. We call the affine isometric action $\beta : G_2 \curvearrowright F$ the \emph{induced action} of $\alpha$ (note that it depends on the choice of $p_1$).

To prove item $(\rm i)$, it is enough to check that if $\beta$ has a fixed point then $\alpha$ also has a fixed point. Let $f \in F$ be a fixed point for $\beta$. This means that $f : X \rightarrow E$ is a $G_1$-equivariant measurable map that is also $G_2$-invariant. Thus we can view $f$ as a $G_1$-equivariant map from $X/G_2$ to $E$. Since $X/G_2$ admits a $G_1$-invariant  probability measure $\mu_2$, we can push it forward by $f$ to obtain a $G_1$-invariant probability measure on $E$. We conclude by \cite[Lemma 2.14]{MR2316269} that $\alpha$ has a fixed point.

To prove item $(\rm ii)$, it is enough to check that if $\alpha$ is proper then $\beta$ is also proper.
We have
  \[ \| \beta_{g_2}(f_0) - f_0 \|_{p,G_1} = \left(\int_{X/G_1} \| c_1(g_2,\omega_1) \cdot x-x \|_E^p \rd\mu_1(\omega_1)\right)^{\frac 1 p}.\]
Take $R > 0$. Since $\alpha$ is proper, we can choose a compact subset $K_1 \subset G_1$ such that $\| g_1 x-x\| \geq R$ for all $g_1 \in G_1 \setminus K_1$. Observe that
\begin{align*} &  \mu_1 \{ \omega_1 \in X/G_1 \mid c(g_2, \omega_1) \in K_1 \}  = \\
&\frac{1}{m_{G_1}(K_1)}  \mu \{ \omega \in X \mid p_1(\omega) \in K_1 \text{ and } p_1(g_2 \omega)^{-1}p_1(\omega) \in K_1 \}  \leq  \\ 
&\frac{1}{m_{G_1}(K_1)} \mu \{ \omega \in X \mid p_1(\omega) \in K_1 \text{ and } p_1(g_2 \omega) \in K_1^{-1} K_1 \} \end{align*}
Since $\mu(p_1^{-1}(K_1 \cup K_1^{-1} K_1))<+\infty$, there exists a compact subset $K_2 \subset G_2$ such that $\mu(p_1^{-1}(K_1 \cup K_1^{-1}K_1) \setminus p_2^{-1}(K_2) ) \leq \frac{1}{4} m_{G_1}(K_1)$. Take $g_2 \in G_2 \setminus K_2 K_2^{-1}$. Then, we cannot have $p_2(\omega) \in K_2$ $p_2(g_2 \omega) \in K_2$ at the same time. Therefore, we get
$$\mu \{ \omega \in X \mid p_1(\omega) \in K_1 \text{ and } p_1(g_2 \omega) \in K_1^{-1} K_1 \} \leq 2 \mu(p_1^{-1}(K_1 \cup K_1^{-1}K_1) \setminus p_2^{-1}(K_2) )$$
which yields
$$ \mu_1 \{ \omega_1 \in X/G_1 \mid c(g_2, \omega_1) \in K_1 \}  \leq \frac{1}{2}.$$
By the choice of $K_1$, this means that 
$$ \mu_1 \{ \omega_1 \in X/G_1 \mid \| c(g_2, \omega_1) \cdot x- x\|_E \geq R\}  \geq \frac{1}{2}$$
hence
$$ \| \beta_{g_2}(f_0)-f_0 \|_{p,G_1} \geq 2^{-1/p} R.$$
This holds for all $g_2 \in G_2$ outside of the compact subset $K_2 K_2^{-1}$. We conclude that $\beta$ is proper.
\end{proof}

When $G$ is a locally compact group and $\Gamma < G$ is a lattice, then $(G,m_G)$ is measure equivalence coupling between $G$ and $\Gamma$. Therefore, Theorem \ref{invariance_under_LpME} covers in particular the following corollary which is already implicit in \cite{MR2316269}.

\begin{cor}[\cite{MR2316269}]\label{cor:critical_exponents_and_integrable_lattices}
Let $G$ be a locally compact compactly generated group and let $\Gamma < G$ be a lattice. Then $p_{\Gamma} \leq p_G$ and $p'_{\Gamma} \leq p'_G$. If $\Gamma$ is $L_p$-integrable for some $p \geq p_\Gamma$ (resp.\ $p \geq p'_\Gamma$) then $p_G=p_{\Gamma}$ (resp.\ $p'_G=p'_{\Gamma}$).
\end{cor}

\section{Semisimple Lie groups and their lattices}
The following result is a combination of \cite[Corollary
  1.4]{MR2421319} and \cite{MR550072}.
\begin{thm} \label{simple Lie group}
Let $G$ be a non-compact connected simple Lie group with finite center. Let $\alpha : G \curvearrowright E$ be an affine isometric action on some $L_p$-space $E$ with $p > 1$. Then $\alpha$ is either proper or has a fixed point. In particular,  $p_G=p'_G$.
\end{thm}
\begin{proof}
Write $\alpha_g(x)=\pi(g)x + c(g)$ where $\pi$ is the linear part of $E$ and $c \in Z^1(G,\pi,E)$. Decompose $E=E_\pi \oplus E_0$ where $E_\pi$ is the subspace of invariant vectors of $\pi$ and $E_0=\ker P$ where $P : E \rightarrow E_\pi$ is the unique $\pi$-invariant projection. The map $g \mapsto P c(g)$ is a continuous homomorphism from $G$ into an abelian group. Since $G$ is a connected simple Lie group, it must vanish. Thus $c(g) \in E_0$ for all $g \in G$ and we can restrict $\alpha$ to an affine isometric action on $E_0$. By the Banach space version of the Howe-Moore property (due to Veech \cite{MR550072}, see also \cite[Appendix 9]{MR2316269}), we know that $\pi|_{E_0}$ is a $C_0$-representation. We conclude by \cite[Corollary 1.4]{MR2421319}, that $\alpha$ is either proper or has a fixed point. 
\end{proof}

Let $\mathfrak{g}$ be a real semisimple Lie Algebra. Let $\mathrm{Ad}(\mathfrak g)$ denote the adjoint group of $\mathfrak g$, that is the connected component of the identity in $\mathrm{Aut}(\mathfrak g)$, and write $p_{\mathfrak g} = p_{\mathrm{Ad}(\mathfrak g)}$ and $p'_{\mathfrak g} = p'_{\mathrm{Ad}(\mathfrak g)}$. By \cite[Section I.14]{MR1920389}, $\mathrm{Ad}(\mathfrak g)$ is isomorphic to $G/Z(G)$ for every connected Lie group with Lie algebra isomorphic to $\mathfrak{g}$.

\begin{ex} \label{example lie algebras} We can compute $p_{\mathfrak g}$ for most semisimple Lie algebras:
  \begin{itemize}
  \item If $\mathfrak{g}$ is a semisimple Lie algebra, then it is a direct sum of simple Lie algebras $\mathfrak{g} = \oplus_i \mathfrak{g_i}$ and $\mathrm{Ad}(\mathfrak{g}) = \prod_i \mathrm{Ad}(\mathfrak{g})$. Therefore by Proposition \ref{prop:stability_product},
    \[ p_{\mathfrak{g}} = \min_i p_{\mathfrak{g}_i} \quad \text{and} \quad p'_{\mathfrak g} = \max_i p'_{\mathfrak{g}_i}.\]
  \item If $\mathfrak g$ is a simple Lie algebra with real rank $0$, then $p_{\mathfrak g} = \infty$ and $p'_{\mathfrak g}=0$ because $\mathrm{Ad}(\mathfrak{g})$ is compact, so all its continuous actions are both proper and bounded.
  \item If $\mathfrak g$ is a simple Lie algebra with real rank $\geq 2$, then $p_{\mathfrak{g}} = p'_{\mathfrak{g}} = \infty$ by \cite[Theorem B]{MR2316269}.
  \item If $\mathfrak g$ is a simple Lie algebra with real rank $1$, then $p_{\mathfrak{g}} = p'_{\mathfrak{g}} < +\infty$ and we have
  \begin{equation}\label{eq:values_criticalp_rankone} p_{\mathfrak{g}}  = p'_{\mathfrak{g}} \begin{cases} =0 & \textrm{if }\mathfrak{g} \simeq \mathfrak{so}(n,1), \: n \geq 2 \\
  =0 & \textrm{if }\mathfrak{g} \simeq \mathfrak{su}(n,1), \: n \geq 2\\ 
   \in(2, 4n+2] & \textrm{if }\mathfrak{g} \simeq \mathfrak{sp}(n,1), \: n \geq 2\\
      \in (2,22] & \textrm{if }\mathfrak{g} \simeq \mathfrak{f}_4^{-20}.\end{cases}\end{equation}
      Indeed, the equality $p_{\mathfrak{g}} = p'_{\mathfrak{g}}$ follows from Theorem \ref{simple Lie group}. The equality $p_{\mathfrak{g}} =0$ for $\mathfrak{g} \simeq \mathfrak{so}(n,1), \: \mathfrak{su}(n,1)$ comes from the fact that the groups $\mathrm{SO}(n,1)$ and $\mathrm{SU}(n,1)$ do not have property (T). The remaining cases have property (T), so $p_{\mathfrak{g}}>2$, and it follows from Pansu's result \cite{MR1086210} (see Theorem \ref{pansu rank one}) that $p_{\mathfrak{sp}(n,1)} \leq 4n+2$ and $p_{\mathfrak{f}_4^{-20}} \leq 22$. That the algebras appearing in \eqref{eq:values_criticalp_rankone} are all the rank one real simple Lie algebras is Cartan's classification \cite{MR1509178}, see also \cite[Theorem 6.105]{MR1920389}. Observe that the list in \cite{MR1920389} is apparently slightly different as it includes $\mathfrak{su}(1,1)$,  $\mathfrak{sl}_2(\C)$, $\mathfrak{sl}_2(\bH)$ but excludes $\mathfrak{so}(n,1)$ for $n = 2, 3, 5$. But this is the same list thanks to the expectional isomorphisms $\mathfrak{su}(1,1) \simeq \mathfrak{so}(2,1)$, $\mathfrak{sl}(2,\C) \simeq \mathfrak{so}(3,1)$ and $\mathfrak{sl}(2,\bH) \simeq \mathfrak{so}(5,1)$ \cite[Formula (6.110)]{MR1920389}.

  \end{itemize}
\end{ex}

It would be very interesting to compute the exact value of $p_{G}$ for $G=\mathrm{Sp}(n,1)$ or $F_4^{-20}$. From the preceding and Theorem \ref{depend on lie algebra}, this would allow to compute $p_G$ for every connected semisimple Lie group and any lattice in it. It is not even known whether $\lim_n p_{\mathrm{Sp}}(n,1)=\infty$.

      \begin{thm} \label{depend on lie algebra}
      Let $G$ be a connected semisimple Lie group with Lie algebra $\mathfrak{g}$. Then
    \begin{equation}\label{eq:value-group} p_G = p_{\mathfrak{g}} \quad  \text{and} \quad p'_G = p'_{\mathfrak{g}}.\end{equation}
    \end{thm} 
    \begin{proof}
    The Theorem is clear if $G$ is compact, so we can and will assume that $G$ is not compact. We can and will even assume (replacing $G$ by its quotient by its compact factors) that $G$ does not have compact factors.

    Consider first the case when $G$ is a connected simple Lie group. If $G$ has finite center $Z(G)$, then $G/Z(G)$ is isomorphic to $\mathrm{Ad}(\mathfrak g)$ so \eqref{eq:value-group} follows from Proposition \ref{prop:stability_quotient_by_compact}. If $Z(G)$ is infinite, it follows from \cite{MR510552} that either $\mathfrak{g}$ is isomorphic to $\mathfrak{su}(n,1)$ for some $n \geq 1$ and $G$ is the universal covering group of $\mathrm{SU}(n,1)$, or $\mathfrak{g}$ has rank $\geq 2$. In the first case, $G$ has the Haagerup property \cite[Theorem 4.0.1]{MR1852148} so $p_G = p'_G=0$. If $\mathfrak{g}$ has rang $\geq 2$, it is also known that $p_G = p'_G=\infty$, see \cite[Corollary 1.2]{strongTsp4}. We obtain \eqref{eq:value-group} in both cases.

    Consider now the general case when $G$ is semisimple. Denote by $\widetilde G$ the universal cover of $G$, and $\widetilde q \colon \widetilde G \to G$ and $q \colon G \to G/Z(G) \simeq \mathrm{Ad}(G)$ the quotient maps. Let $\mathfrak{g} = \oplus_{i=1}^n \mathfrak{g_i}$ be the decomposition of the Lie algebra of $G$ as a direct sum of simple Lie algebras, and $\widetilde G_i$ the simply connected Lie group with Lie algebra $\mathfrak{g}_i$. We can identify $\widetilde G$ with $\prod_{i=1}^n \widetilde G_i$, and $G/Z(G)$ with $\prod_i \mathrm{Ad}(\mathfrak{g}_i)$. We deduce from the case of simple Lie groups already covered and from Proposition \ref{prop:stability_product} that $p_{\widetilde G} = p_{\mathfrak{g}} = p_{G/Z(G)}$.  On the other hand, Proposition \ref{prop:subgroups_and_quotients} gives $p_{\widetilde G} \geq p_G \geq p_{G/Z(G)}$, so we deduce $p_G = p_{\mathfrak{g}}$. Moreover, the image $G_i:=\widetilde{q}(\widetilde G_i)$ is a closed subgroup of $G$ with Lie algebra $\mathfrak{g_i}$, so we have $p'_{\mathfrak{g}} = \max_i p'_{\mathfrak{g}_i} = \max_i p'_{G_i} \leq p'_G$. So it remains to show that $p'_G \leq p'_{\mathfrak g}$. This is obvious if $p'_{\mathfrak g}=\infty$. When $\mathfrak{p'_{\mathfrak g}}=0$, the $\mathfrak{g_i}$ are isomorphic to $\mathfrak{su}(n,1)$ or $\mathfrak{so}(n,1)$, so we conclude by \cite[Theorem 4.0.1]{MR1852148} that $G$ has the Haagerup property, or equivalently $p'_G=0$. It remains to consider the case when $2<p'_{\mathfrak g}<\infty$. Let $\mathfrak{g}_H \subset \mathfrak{g}$ denote the sum of the $\mathfrak{g}_i$ with $p'_{\mathfrak{g_i}} = 0$, and $\mathfrak{g}_T$ the sum of the rest, that is the subalgebras isomorphic to $\mathfrak{sp}(n,1)$ or $\mathfrak{f}_4^{-20}$ (H for Haagerup and T for property (T)). Denote $G_H$ and $G_T$ the corresponding analytic subgroups of $G$. As explained above, we know from \cite{MR510552} that $Z(G_T)$ is finite, so by Proposition \ref{prop:stability_quotient_by_compact} we can assume that $Z(G_T) = \{1\}$. This means that $G$ is isomorphic to $G_H \times \mathrm{Ad}(\mathfrak{g}_T)$, so $p'_G = \max(p'_{G_H},p'_{\mathfrak{g}_T})$. This is equal to $p'_{\mathfrak g}$ as we already justified that $p'_{G_H}=0$.
    \end{proof}

We need the following classical fact. It is for example stated for higher-rank groups in \cite[Lemma 5.2]{dlSActa}, but the proof remains valid without any rank assumption.
  \begin{lemma}\label{lem:image_of_lattice_is_lattice} Let $G$ be a connected semisimple Lie group, and denote $q \colon G \to G/Z(G)$ the quotient map. If $\Gamma < G$ is a lattice, then $q(\Gamma) < G/Z(G)$ is a lattice and $\Gamma$ has finite index in $\Gamma Z(G)$.
  \end{lemma}

  \begin{thm}\label{thm:integrability} Let $G$ be a connected semisimple Lie group and $\Gamma < G$ be a lattice. Then $\Gamma$ is $L_p$-integrable for all $p < p_{\mathfrak g}$. Moreover, if $q(\Gamma)<G/Z(G)$ is an irreducible lattice, then $\Gamma$ is $L_p$-integrable for all $p < p'_{\mathfrak g}$.
  \end{thm}
  \begin{proof} It is enough to prove the Theorem when $G$ has trivial center. Indeed, in the notation of Lemma \ref{lem:image_of_lattice_is_lattice}, by the argument given in \cite[Proposition 7.1]{strongTsp4}, a lattice $\Gamma\subset G$ is $L_p$-integrable if the lattice $q(\Gamma) < G/Z(G)$ is $L_p$-integrable. The essential ingredient is the fact that the central extension $G \to Z(G)$ is given by a bounded $2$-cocycle. For simple Lie groups, one can justify this by refering to \cite{MR510552} (see also \cite{MR1857863}). The general case of semisimple Lie groups follows by decomposing the universal cover into simple parts.

    So assume that $G$ has trivial center. We can assume that $G$ has no compact factor. By decomposing into irreducible factors, we can assume that $\Gamma$ is irreducible. When $G$ has higher rank, we have more than the Theorem~: $\Gamma$ is $L_p$-integrable for every $p<\infty$. This is what the proof of the $L_2$-integrability in \cite[\S 2]{MR1767270} actually shows. See also the proof of \cite[Lemma 5.6]{dlSActa} for a simpler argument, showing that the $L_p$-integrability holds for every quasi-isometrically embedded lattice in a Lie group.  When $G$ has rank $1$, the only cases to consider are $\mathfrak{g} = \mathfrak{sp}(n,1)$, $n\geq 2$ and $\mathfrak{f}_4^{-20}$ as the other rank one simple Lie algebras satisfy $p_{\mathfrak{g}}=p'_{\mathfrak{g}}=0$. By Theorem \ref{L_p integrable rank one}, we know that every lattice in $\mathrm{Sp}(n,1)$ or $F_4^{-20}$ is $L_p$-integrable for all $p < 4n+2$ and $p < 22$ respectively. Since we also know that $p_{\mathrm{Sp}(n,1)} \leq 4n+2$ and $p_{F_4^{-20}} \leq 22$, thanks to Theorem \ref{pansu rank one}, this ends the proof.
  \end{proof}

    \begin{thm} Let $G$ be a connected semisimple Lie group with Lie algebra $\mathfrak{g}$ and let $\Gamma < G$ be a lattice. Then
    \begin{equation}\label{eq:value-lattice} p_\Gamma = p_G = p_{\mathfrak{g}} \quad  \text{and} \quad p'_\Gamma = p'_G = p'_{\mathfrak{g}}.\end{equation}
    \end{thm} 

  \begin{proof} The Theorem is clear if $G$ is compact, so we can and will assume that $G$ is not compact. We can and will even assume (replacing $G$ by its quotient by its compact factors) that $G$ does not have compact factors. Denote by $\widetilde G$ the universal cover of $G$, and $\widetilde q \colon \widetilde G \to G$ and $q \colon G \to G/Z(G) \simeq \mathrm{Ad}(G)$ the quotient maps. Let $\mathfrak{g} = \oplus_{i=1}^n \mathfrak{g_i}$ be the decomposition of the Lie algebra of $G$ as a direct sum of simple Lie algebras.
  
  Let $\Lambda:=q(\Gamma) < G/Z(G)$. When $\Lambda$ is an irreducible lattice, \eqref{eq:value-lattice} follows from Theorem \ref{thm:integrability}. In general, we reduce to the irreducible case. By Lemma \ref{lem:image_of_lattice_is_lattice}, $\Gamma$ is a lattice in $G/Z(G)$, and $\Gamma$ has finite index in $\Gamma Z(G) = q^{-1}(\Lambda)$. So replacing $\Gamma$ by $\Gamma Z(G)$, we can assume that $Z(G) \subset \Gamma$ and define $\widetilde \Gamma = \widetilde q^{-1}(\Gamma) = (q \circ \widetilde q)^{-1}(\Lambda)<\widetilde G$. Decompose $\Lambda$ into irreducible components. To do so, consider a maximal partition $(A_1,\dots,A_m)$ with the property that $\prod_k (\Lambda_k:= \Lambda \cap \mathrm{Ad}(\oplus_{i \in A_k} \mathfrak{g}_i))$ is of finite index in $\Lambda$. Then every $\Lambda_k$ is an irreducible lattice in $\mathrm{Ad}(\oplus_{i \in A_k} \mathfrak{g}_i)$, and replacing $\Gamma$ be a finite index subgroup, we can assume that $\Lambda = \prod_k \Lambda_k$.

The lattice $\Gamma$ is not necessarily a product, but this is true in the universal cover: if $\widetilde \Lambda_k$ denotes the lift of $\Lambda_k$ in the universal cover $\widetilde{G}_{A_k}$ of $\mathrm{Ad}(\oplus_{i \in A_k} \mathfrak{g}_i)$, then $\widetilde \Gamma = \prod_k \widetilde \Lambda_k$. It follows from Proposition \ref{prop:stability_product} and from the case of irreducible lattices that $p_{\Lambda} = \min_k p_{\Lambda_k} = p_{\mathfrak g}$ and that $p_{\widetilde \Gamma} = \min_k p_{\widetilde{\Lambda}_k} = p_{\mathfrak g}$, so we deduce $p_\Gamma = p_{\mathfrak g}$ by Proposition \ref{prop:subgroups_and_quotients}. For the invariant $p'_\Gamma$, the inequality $p'_\Gamma \leq p'_G = p'_{\mathfrak{g}}$ is by Proposition \ref{prop:subgroups_and_quotients}. For the converse, observe that the discrete subgroup $\widetilde{q}(\widetilde \Lambda_k)<\Gamma$ is a lattice in the quotient of $\widetilde{G}_{A_k}$ by $ \widetilde{G}_{A_k} \cap \ker q $, so we obtain from the case of irreducible lattices and Proposition \ref{prop:subgroups_and_quotients} that $\max_{i \in A_k} p_{\mathfrak{g}_i} = p_{q(\widetilde \Lambda_k)} \leq p'_\Gamma$.
    Taking the maximum over $k$ we obtain $p'_{\mathfrak g}  \leq p'_\Gamma$. This concludes the proof of  \eqref{eq:value-lattice} and of the Theorem.
\end{proof}

\appendix

\section{Rank one symmetric spaces and $L_p$-integrability.}
By \'Elie Cartan's classification (see Example \ref{example lie algebras} for references), every connected simple Lie group of real rank one is locally isomorphic to the isometry group of one of the following symmetric spaces of rank one :
\begin{itemize}
\item The real hyperbolic space $\R \mathbb{H}^n$ for $n \geq 2$, its isometry group is $\mathrm{SO}(n,1)$.
\item The complex hyperbolic space $\C \mathbb{H}^n$ for $n \geq 2$, its isometry group is $\mathrm{SU}(n,1)$.
\item The quaternionic hyperbolic space $\bH \mathbb{H}^n$ for $n \geq 2$, its isometry group is $\mathrm{Sp}(n,1)$.
\item The octonionic hyperbolic plane $\bO \mathbb{H}^2$, its isometry group is the exceptional Lie group $F_4^{-20}$.
\end{itemize}
Let $\K=\R,\C,\bH$ and $n \geq 2$ or $\K=\bO$ and $n=2$. Let $d=\dim_\R \K \in \{1,2,4,8\}$. We give a description of the hyperbolic space $\K \mathbb{H}^n$ that will be convenient for our study. Let $\langle \cdot, \cdot \rangle : \K^{n} \times \K^{n} \rightarrow \K$ be the standard sesquilinear form given by 
$$ \langle x, y \rangle = \overline{x_1}y_1+ \overline{x_1}y_1 + \overline{x_2}y_2 + \cdots + \overline{x_{n}}y_{n}.
$$
Define a nilpotent Lie group $N$ formally given by $N=\K^{n-1} \times \Im(\K)$ but with the group law
$$ (\xi_1,r_1) \cdot (\xi_2,r_2)=(\xi_1+\xi_2,r_1+r_2 + \Im \langle \xi_2,\xi_1 \rangle)$$
where $\langle \cdot, \cdot \rangle : \K^{n-1} \times \K^{n-1} \rightarrow \K$ is the standard sesquilinear form and $\Im$ denotes the imaginary part. We define an action of $\R$ by group automorphisms on $N$ with the formula
$$t \cdot (\xi,r)=(e^t \xi, e^{2t}r), \quad t \in \R, \; (\xi,r) \in N.$$
We let $P=N \rtimes \R$ be the corresponding semi-direct product.

\begin{thm}[Heintze \cite{heintze}]
The hyperbolic space $\K\mathbb{H}^n$ is isometric to the group $P$ equipped with the unique left invariant riemannian metric on $P$ that is given at the origin $(0,0,0) \in P$ by
$$ \rd s^2=\frac{1}{2}\| \mathrm{d} \xi \|^2+\frac{1}{4}| \mathrm{d} r|^2+\mathrm{d} t^2.$$
\end{thm}
Thanks to the description of $X=\K \mathbb{H}^n$ as a group with a left invariant metric, we can directly apply \cite[Proposition 24]{MR1086210} to get the following result.

\begin{thm}[Pansu \cite{MR1086210}] \label{pansu rank one}
Let $G$ be the isometry group of $X$. Then the cohomology of $G$ with values in $L_p(G)$ (for the left regular representation) is nonzero for all $p > d(n+1)-2$ where $d=\dim_\R \K$.
\end{thm}
\begin{proof}
We have $P=N \rtimes \R$ and the action of $\R$ on $N$ is generated by a derivation on $N=\K^{n-1} \times \Im(\K)$ that has eigenvalue $1$ on $\K^{n-1}$ and eigenvalue $2$ on $\Im(\K)$. It follows from \cite[Proposition 24]{MR1086210} that the $L_p$ cohomology of $X$ is nonzero for all 
$$p > \dim_\R \K^{n-1} +2 \dim_\R \Im(\K)= d(n-1)+2(d-1)=d(n+1)-2.$$
Finally, the $L_p$-cohomology of $G$ with values in the left regular representation can be identified with the $L_p$-cohomology of $X$, see \cite[Section 4]{MR2421319}.
\end{proof}

Let $X=\K\mathbb{H}^n$. Let $\partial X$ denote its boundary.
Fix $\omega \in \partial X$. We define the geodesic flow $(\pi_t)_{t \in \R}$ with source $\omega$ as follows : for every $x \in X$, the map $t \mapsto \pi_t(x)$ is the unit speed geodesic with $\pi_0(x)=x$ and $\lim_{t \to -\infty} \pi_t(x)=\omega$. We define an equivalence relation on $X$ by saying that $x,y \in X$ are equivalent if
$$ \lim_{t \to -\infty} d(\pi_t(x),\pi_t(y))=0.$$
The equivalence classes of this equivalence relation are called \emph{horospheres} centered at $\omega$.

It is easy to see from the definition of $P$ and the left invariance of its metric that $t \mapsto (0,0,t) \in P$ is a unit speed geodesic path. Fix an isometry $\tau : P \rightarrow X$ such that the geodesic line $t \mapsto \tau(0,0,t)$ converges to $\omega$ (this is possible because the isometry group of $X$ acts transitively on $\partial X$). We observe the following facts which can be deduced easily from the left invariance of the metric of $P$:
\begin{enumerate}
\item  We have $\pi_t(\tau(\xi,r,s))=\tau(\xi,r,s-t)$ for all $(\xi,r,s) \in P$.
\item For every $s \in \R$, the set $H_s=\{ \tau(\xi,r,s) \mid (\xi,r) \in N\}$ is a horosphere centered at $\omega$.
\end{enumerate}
Of course the map $\pi_t$ restricts to a bijection from $H_s$ onto $H_{s-t}$ for all $s,t \in \R$ but it is not an isometry. Instead we have the following property.

\begin{prop} \label{dilation}
Fix $\omega \in \partial X$ and let $(\pi_t)_{t \in \R}$ be the associated geodesic flow. Take $x \in X$ and let $H$ be the horosphere centered at $\omega$ and containing $x$. Then there exists an orthogonal decomposition of the tangent space $$\mathbf{T}H_x=V_1 \oplus V_2$$
such that $\| (\rd\pi_t)_x(v)\|=e^{kt}\|v\|$ for all $v \in V_k, \: k=1,2$ and all $t \in \R$. The space $V_1$ has dimension $d(n-1)$ and $V_2$ has dimension $d-1$.
\end{prop}
\begin{proof}
We may choose the isometry $\tau : P \rightarrow X$ such that $x=\tau(0,0,0)$. Define $V_1$ to be the subspace of $\mathbf{T}H_x$ tangent to $\{ \tau(\xi,0,0) \mid \xi \in \K^{n-1} \}$ and $V_2$ to be the subspace tangent to $\{ \tau(0,r,0) \mid r \in \Im(\K) \}$. Then $V_1$ and $V_2$ are orthogonal by the definition of the metric on $P$. By the left invariance of the metric on $P$, the map $$\alpha_t : P \ni \tau(\xi,r,s) \mapsto \tau((0,0,t) \cdot (\xi,r,s))=\tau(e^t\xi,e^{2t}r,s+t) \in P$$
is an isometry. Observe that for all $(\xi,r,s) \in P$, we have
$$(\alpha_t \circ \pi_t)(\xi,r,s)=(e^t\xi,e^{2t}r,s).$$
This shows that $\rd( \alpha_t \circ \pi_t)_x(v)=e^{kt}v$ for all $v \in V_k, \: k=1,2$. Since $\alpha_t$ is an isometry, the conlusion follows.
\end{proof}

For every horosphere $H \subset X$, the distribution $x \mapsto V_1(x) \subset \mathbf{T}H_x$, defined by the proposition above, is called the \emph{Carnot distribution} of $H$. It has codimension $d-1$. A key fact is that any two points $x,y \in H$ can be joined by a smooth path in $H$ that remains tangent to the Carnot distribution at every point. The Carnot distance $d_C(x,y)$ is then defined as the infinimum of the lengths of all such paths. It follows from Proposition \ref{dilation} that $d_C(\pi_t(x),\pi_t(y))=e^td_C(x,y)$ for all $x,y \in H$.

\begin{lemma} \label{geodesic projection dilation}
Fix $\omega \in \partial X$ and let $(\pi_t)_{t \in \R}$ be the associated geodesic flow. Let $H \subset X$ be a horosphere and let $D \subset H$ be a bounded open subset.
\begin{enumerate}
\item There exists a constant $C > 0$ such that the diameter of $\pi_t(D)$ (in the path metric of $\pi_t(H)$) is bounded by $C\exp(t)$ for all $t \in \R$.
\item There exists a constant $C' > 0$ such that the volume of $\pi_t(D)$ (with respect to the volume form of $\pi_t(H)$) is equal to $C'\exp((d(n+1) - 2 )t)$ for all $t \in \R$.
\end{enumerate}
\end{lemma}
\begin{proof}
(1) For the Carnot distance, the diameter of $\pi_t(D)$ is equal to the diameter of $D$ times $\exp(t)$. But by definition, the path distance is always less than the Carnot distance. Therefore the diameter of $\pi_t(D)$ for the the path distance of $\pi_t(H)$ is bounded by $C\exp(t)$ for some $C > 0$.

(2) Fix $t \in \R$. Let $m_0$ be the volume form of $H$ and $m$ be the volume form of $\pi_t(H)$. Then by Proposition \ref{dilation}, we have $(\pi_t)_* m_0 = e^{td(n-1) + 2t (d-1)} m$. This imples $$m(\pi_t(D))=e^{(d(n+1)-2)t} m_0(D).$$
\end{proof}

The proof of the following theorem is due to Rich Schwartz. The proof can be found in the appendix of \cite{10.2307/2661380}. We briefly argue that the proof of Schwartz can be adapted to include the exceptional octonionic case and the case $p \neq 2$ in a unified way.

\begin{thm}[Schwartz] \label{L_p integrable rank one}
Let $\Gamma < G=\mathrm{Isom}(X)$ be any lattice. Then it is $L_p$-integrable for all $p < d(n+1)-2$.
\end{thm}
\begin{proof}
We follow the notations and the proof of \cite[Theorem 3.7]{10.2307/2661380} given in the appendix. We have to show that the function $f$ defined there is $L_p$-integrable on $F^0$ and as explained in Lemma 2.1, for this it is enough to show that $\sum_{m \in \N} g(m)^pv(m) < +\infty$. Recall that $v(m)$ is the volume of
$$B(m)=\bigcup_{m \leq t \leq m+1} \pi_{-t}(F_0).$$
By Lemma \ref{geodesic projection dilation}, we know that
$$ v(m) = C'\int_{m}^{m+1} \exp \left( -(d(n+1)-2)t \right) \: \rd t = A'\exp \left( -(d(n+1)-2)m \right)$$
for some constant $A' > 0$. Fix any $x \in H_0$ and let $\alpha_t$ be the isometry defined in Proposition \ref{dilation}, with $\alpha_t(x)=\pi_{-t}(x)$ for all $t \in \R$. Then we can find a bounded open ball $D \subset H_0$ around $x$ such that the unit tubular neighborhood $B_1(m)$ is contained in
$$ \bigcup_{m-1 \leq t \leq m+2} \alpha_t(D)$$
for all $m \in \N$. Thus $\pi(B_1(m))$ is contained 
$$ \bigcup_{m-1 \leq t \leq m+2} \pi_t(\alpha_t(D))=\bigcup_{m-1 \leq t \leq m+2} \alpha_t(\pi_t(D))$$
Thanks to Lemma \ref{dilation} and since $\alpha_{t}$ is an isometry, the diameter of $\alpha_t(\pi_t(D))$ is bounded by $C \exp(t)$ for some $C > 0$. Since $x \in \alpha_t(\pi_t(D))$ for all $t$, it follows that the diameter of $\pi(B_1(m))$ is bounded by $A \exp(m)$ for some constant $A > 0$. Finally, we conclude that
$$ \sum_{m \in \N} g(m)^pv(m) \leq A^pA' \exp(pm) \exp(-(d(n+1)-2)m) < +\infty$$
whenever $p < d(n+1)-2$.
\end{proof}

\bibliographystyle{plain_correctalpha} 
\bibliography{biblio}

\end{document}